\newtheorem{thm}{Theorem}[section]
\newtheorem{cor}[thm]{Corollary}
\newtheorem{lem}[thm]{Lemma}
\newtheorem{prop}[thm]{Proposition}
\newtheorem{obs}[thm]{Observation}
\newtheorem{conj}[thm]{Conjecture}
\newtheorem*{conj1redux}{Conjecture~\ref{conj:AlKr} redux}
\newtheorem*{claim}{Claim}
\newtheorem*{thm*}{Theorem}
\newtheorem{prob}[thm]{Problem}
\theoremstyle{definition}
\newcommand*{\myproofname}{Proof}
\newenvironment{claimproof}[1][\myproofname]{\begin{proof}[#1]}{\end{proof}}
\newcommand*{\ceilfrac}[2]{\mathopen{}\left\lceil\frac{#1}{#2}\right\rceil\mathclose{}}
\newcommand*{\bceil}[1]{\left\lceil #1\right\rceil}
\begin{document}

\title{A precise condition for independent transversals in bipartite covers\footnote{A preliminary version of this work, absent of the main proofs, appeared in the Proceedings of the 12th European Conference on Combinatorics, Graph Theory and Applications ({\em EuroComb 2023}, Prague).}}

\author{Stijn Cambie\thanks{
Department of Computer Science, KU Leuven Campus Kulak Kortrijk, Belgium. Partially supported by 
the Institute for Basic Science (IBS-R029-C4) and by Internal Funds of KU Leuven (PDM fellowship PDMT1/22/005).
\protect\href{mailto:stijn.cambie@hotmail.com}{\protect\nolinkurl{stijn.cambie@hotmail.com}}}
\and
Penny Haxell\thanks{Department of Combinatorics and Optimization, University of Waterloo, Waterloo, ON Canada N2L 3G1. Partially supported by NSERC. \protect\href{mailto:pehaxell@uwaterloo.ca}{\protect\nolinkurl{pehaxell@uwaterloo.ca}}}
\and
Ross J. Kang\thanks{Korteweg--de Vries Institute for Mathematics, University of Amsterdam, PO Box 94248, 1090 GE Amsterdam, Netherlands. Partially supported by a Vidi grant (639.032.614) and the Gravitation Programme NETWORKS (024.002.003) of the Dutch Research Council (NWO). \protect\href{mailto:r.kang@uva.nl}{\protect\nolinkurl{r.kang@uva.nl}}}
\and
Ronen Wdowinski\thanks{Department of Combinatorics and Optimization, University of Waterloo, Waterloo, ON Canada N2L 3G1. \protect\href{mailto:ronen.wdowinski@uwaterloo.ca}{\protect\nolinkurl{ronen.wdowinski@uwaterloo.ca}}}}

\date{\today}

\maketitle

\begin{abstract}
Given a bipartite graph $H=(V=V_A\cup V_B,E)$ in which any vertex in $V_A$ (resp.~$V_B$) has degree at most $D_A$ (resp.~$D_B$), suppose there is a partition of $V$ that is a refinement of the bipartition $V_A\cup V_B$ such that the parts in $V_A$ (resp.~$V_B$) have size at least $k_A$ (resp.~$k_B$). We prove that the condition $D_A/k_B+D_B/k_A\le 1$ is sufficient for the existence of an independent set of vertices of $H$ that is simultaneously transversal to the partition, and show moreover that this condition is sharp.
This result is a bipartite refinement of two well-known results on independent transversals, one due to the second author and the other due to Szab\'o and Tardos.
\end{abstract}


\section{Introduction}

Consider the following question: how much easier is it to colour graphs that are bipartite than to colour graphs in general? Of course, when considered in the context of the usual chromatic number, this is utterly trivial: compared to the general case, for which the chromatic number can be $\Delta(G)+1$ but no larger (with $\Delta(G)$ denoting the maximum degree of $G$), the factor of reduction in the number of necessary colours is of order $\Delta(G)$. We treat some settings stronger than that of ordinary proper colouring, settings that have both classic and contemporary combinatorial motivation.

Recall the definition of the list chromatic number, a notion introduced nearly half a century ago independently by Erd\H{o}s, Rubin and Taylor~\cite{ERT80} and by Vizing~\cite{Viz76}. Let $G = (V,E)$ be a simple, undirected graph. 
A mapping $L: V(G)\to 2^{{\mathbb Z}^+}$ is called a {\em list-assignment} of $G$. If for some positive integer $k$, the mapping $L$ satisfies $|L(v)|=k$ for all $v$, then it is called a {\em $k$-list-assignment}.
A colouring $c: V\to {\mathbb Z}^+$ is called an {\em $L$-colouring} if $c(v)\in L(v)$ for any $v\in V$. We say $G$ is {\em $k$-choosable} if for any $k$-list-assignment $L$ of $G$ there is a proper $L$-colouring of $G$. The {\em choosability $\chi_\ell(G)$} (or {\em choice number} or {\em list chromatic number}) of $G$ is the least $k$ such that $G$ is $k$-choosable.

Framing the above question with respect to the list chromatic number, note first that a greedy procedure implies $\chi_\ell(G) \le \Delta(G)+1$ always,  which is exact for $G$ a complete graph. However, for bipartite $G$, it is a longstanding conjecture that $\chi_\ell(G)$ must be lower than this bound by a factor of order $\Delta(G)/\log \Delta(G)$.

\begin{conj}[Alon and Krivelevich~\cite{AlKr98}]\label{conj:AlKr}
There is some $C\ge 1$ such that $\chi_\ell(G) \le C\log_2 \Delta(G)$ for any bipartite graph $G$ with $\Delta(G)\ge2$.
\end{conj}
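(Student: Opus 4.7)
This is a long-standing open problem, so what follows is a plausible programme rather than a complete argument. The natural reduction is to the bipartite cover $\cH$ of $(G,L)$: for every $v \in V(G)$ introduce a cloud $C_v = \lset{(v,c) : c \in L(v)}$ of size $k = |L(v)|$, and join $(u,c)$ to $(v,c)$ in $\cH$ whenever $uv \in E(G)$. Bipartiteness of $G$ induces a bipartition $V_A \cup V_B$ of the clouds, and a proper $L$-colouring is exactly an independent transversal of $\cH$, setting the stage for the paper's main theorem.

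A direct application with $D_A, D_B \le \Delta(G)$ and $k_A = k_B = k$ yields only the sufficient condition $k \ge 2\Delta(G)$, matching the trivial greedy bound and leaving an exponential gap to the conjectured $k = O(\log \Delta)$. Any viable route must therefore couple the theorem with probabilistic machinery.

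My plan is an iterative random sparsification. In each round, at every vertex I would sample a random sub-list of appropriate size, then use Chernoff concentration together with the Lov\'asz Local Lemma to control the resulting cross-cloud degrees, and finally invoke the main theorem on the sparsified instance to properly colour a constant fraction of vertices. Those vertices are removed, and the procedure iterates on the sparser residual subgraph. The asymmetry permitted by the main theorem (allowing $D_A \ne D_B$ and $k_A \ne k_B$) is particularly suited to a scheme that colours one side first and then exploits the reduced effective degree on the other side, which is precisely the structural feature of the bipartite setting that the conjecture presumably leverages.

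The hard part, and the reason this conjecture has resisted decades of attack, is that each probabilistic round typically loses a multiplicative constant in the list-size budget; accumulating $\log_2 \Delta$ rounds then destroys the logarithmic target. A successful argument would require either an essentially lossless round that genuinely exploits bipartiteness, or a potential-function scheme that amortises losses across rounds. The sharpness of the main theorem is a promising anchor since it avoids constant-factor slack; but the ingredient needed to bridge the gap between the best known $O(\Delta/\log \Delta)$-type bounds and the conjectured $O(\log \Delta)$ appears to require a genuinely new probabilistic idea beyond what the cover framework alone supplies.
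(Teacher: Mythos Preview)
The statement you are addressing is Conjecture~\ref{conj:AlKr}, which the paper explicitly presents as an \emph{open conjecture}, not a theorem. There is no proof in the paper to compare against: the authors describe it as a ``longstanding conjecture'' and note that the best known bound remains $\chi_\ell(G)=O(\Delta/\log\Delta)$ via Johansson's result. So your proposal cannot be assessed as ``matching'' or ``differing from'' the paper's argument, because no such argument exists.

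To your credit, you recognise this and frame your write-up as a programme rather than a proof. Your reduction to the cover graph and the observation that Theorem~\ref{thm:main} applied directly only yields $k\ge 2\Delta$ are both correct (though calling $2\Delta$ the ``trivial greedy bound'' is slightly off: greedy gives $\Delta+1$, so the cover theorem is in fact a factor of two weaker here). Your diagnosis of the obstruction---that iterative random sparsification loses a constant factor per round and hence cannot reach $O(\log\Delta)$ over $\log\Delta$ rounds---is accurate and is essentially why the problem remains open.

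However, a proof proposal for an open conjecture that concludes by saying a ``genuinely new probabilistic idea'' is needed is not a proof; it is a survey of the difficulty. If the assignment was to prove the statement, the honest answer is simply that it cannot currently be done, and you should say so directly rather than dress up the impasse as a proof environment.
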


\noindent
If true, this statement would be sharp up to the value of $C$, due to the complete bipartite graphs~\cite{ERT80}.
For an idea of how stubborn this problem has been, we relate to the reader how the current best progress was essentially {\em already known} to the conjecture's originators.
In particular, a seminal result for triangle-free graphs of Johansson~\cite{Joh96} from the mid-1990's implies that $\chi_\ell(G) = O(\Delta(G)/\log\Delta(G))$ as $\Delta(G)\to\infty$ for any bipartite $G$, hence a reduction factor only of order $\log\Delta(G)$. 

To stimulate activity, two of the authors with Alon~\cite{ACK21,CaKa22} proposed some natural refinements and variations of Conjecture~\ref{conj:AlKr}, and offered modest related progress. Although less directly relevant to Conjecture~\ref{conj:AlKr}, the present work has the momentum of this trajectory. We introduce some definitions needed to properly describe this progression. In particular, we cast the (bipartite) colouring task in a more precise and general way.

Let $G$ and $H$ be simple, undirected graphs.
We say that $H$ is a {\em cover (graph)} of $G$ with respect to a mapping $L: V(G) \to 2^{V(H)}$ if $L$ induces a partition of $V(H)$ and the bipartite subgraph induced between $L(v)$ and $L(v')$ is edgeless whenever $vv'\notin E(G)$.
If for some positive integer $k$, the mapping $L$ satisfies $|L(v)|=k$ for all $v$, then we call $H$ a {\em $k$-fold} cover of $G$ (with respect to $L$).
Moreover, if $G$ and $H$ are bipartite graphs, where $G$ admits a bipartition $V(G)=A_G\cup B_G$ and $H$ admits a bipartition $V(H)=A_H\cup B_H$, then we say that $H$ is a {\em bipartite-cover (graph)} of $G$ with respect to $L$ if $L(A_G)$ induces a partition of $A_H$ and $L(B_G)$ induces a partition of $B_H$, i.e.~the bipartitions of $G$ and $H$ suitably align.
We will have reason to be even more specific for this situation by referring to $H$ as an {\em $(A,B)$-cover} of $G$ (with respect to $L$). 
(Here we regard $A$ as the pair $(A_G,A_H)$ of partitions, and $B$ similarly.)
To denote this setup succinctly we write
$H=H(A,B,G,L)$ for the bipartite-cover. 

To connect the notions above to Conjecture~\ref{conj:AlKr}, notice that, for any list-assignment $L$ of some graph $G$, one may construct a cover graph $H$ as follows. The vertices of $H$ consist of all pairs $(v,x)$ for $v\in G$ and $x\in L(v)$, and $E(H)$ is a subset of the collection of pairs $(v,x)(v',x')$ such that $vv'\in E(G)$ and $x=x'$. By regarding $L$ as a mapping from $v$ to $\{(v,x) \mid x\in L(v)\}$, we can then regard $H$ as a cover graph of $G$ with respect to $L$. 
Moreover, if $G$ is bipartite, the corresponding $H$ is a bipartite-cover of $G$ with respect to $L$.
We refer to any (bipartite) cover graph constructed as above as a {\em (bipartite) list-cover}.
Moreover, if $E(H)$ is chosen maximally, we may refer to $H$ as the {\em maximal (bipartite) list-cover} of $G$ with respect to $L$.
Notice that a proper $L$-colouring of $G$ is equivalent to an independent set in the corresponding maximal list-cover $H$ that is transversal to the partition induced by $L$ (that is, it intersects every part exactly once). We call such an independent set an {\em independent transversal (IT)} of $H$.

\begin{conj1redux}
There is some $C\ge 1$ such that, for any bipartite graph $G$ of 
maximum degree $\Delta\ge2$, any $\bceil{ C\log_2 \Delta } $-fold
bipartite list-cover of $G$ admits an independent transversal.
\end{conj1redux}

There are three potential directions to  highlight through adoption of the above terminology.
First, note that list-covers form a proper subclass of all cover graphs,
and so we might consider the `colouring' task under increasingly more general conditions with respect to $H$. More specifically, we may ask analogous questions about sufficient conditions for the existence of an IT in natural and successively larger superclasses of list-covers (among all cover graphs).
Second, note that if $G$ has maximum degree at most $\Delta$, then a list-cover of $G$ has maximum degree at most $\Delta$, 
but the converse is not true in general.
Hence, for instance, we may consider a problem/result about list-colouring in some class of bounded degree graphs and try to generalise it to the analogous class of bounded degree list-covers. This type of `colour-degree' problem was introduced by Reed~\cite{Ree99}.
Third, and specific to $(A,B)$-covers, we may insist on a more refined viewpoint by imposing (degree/list-size/structural) conditions on parts $A$ and $B$ separately.
Two of the authors together with Alon~\cite{ACK21} introduced this third asymmetric perspective for studying Conjecture~\ref{conj:AlKr}, and in a follow up~\cite{CaKa22} they furthermore took on the first two perspectives, in particular by generalising the problem to so-called {\em correspondence-covers}, which we discuss later.
Here we concentrate on the most general case for (asymmetric, bipartite) cover graphs with given degree bounds.

The following problem was posed in~\cite{CaKa22} (see therein the special case of Problem~1.1 with $\Delta_A, \Delta_B$ infinite).
\begin{prob}\label{prob:CaKa22}
Let $H=H(A,B,G,L)$ be a bipartite-cover.
Determine conditions on positive integers $k_A$, $k_B$, $D_A$, $D_B$ that
suffice to ensure the following: If the maximum degrees in $A_H$ and
$B_H$ are $D_A$ and $D_B$, respectively, and $|L(v)| \ge k_A$ for all
$v \in A_G$ and $|L(w)| \ge k_B$ for all $w \in B_G$, then there is
guaranteed to be an independent transversal of $H$ with respect to
$L$. 
\end{prob}
\noindent
We resolve Problem~\ref{prob:CaKa22} through the following sufficient
condition for a bipartite-cover graph to admit an IT. 
\begin{thm}\label{thm:main}
Let $H=H(A,B,G,L)$ be a bipartite-cover.
Let positive integers $k_A$, $k_B$, $D_A$, $D_B$ be such that
$\frac{D_B}{k_A} + \frac{D_A}{k_B} \le 1$.
If the maximum degrees in $A_H$ and
$B_H$ are $D_A$ and $D_B$, respectively, and $|L(v)| \ge k_A$ for all
$v \in A_G$ and $|L(w)| \ge k_B$ for all $w \in B_G$, then $H$ admits
an independent transversal with respect to $L$. 
\end{thm}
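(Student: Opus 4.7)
The plan is to prove Theorem~\ref{thm:main} by contradiction, following the spirit of Haxell's combinatorial proof for the symmetric case ($k \ge 2\Delta$) and refining it for the asymmetric bipartite-cover setting. Assume $H = H(A, B, G, L)$ satisfies the hypotheses but admits no IT. Let $I$ be a partial independent transversal of $H$ hitting the maximum possible number of parts of $L$; subject to maximality, we select $I$ to minimize a carefully chosen auxiliary measure (for instance, a weighted count of blocker vertices). Since $I$ is not a full IT, some part $J_0$ is missed; without loss of generality $J_0 \in L(A_G)$, so that $J_0 \subseteq A_H$.

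Starting from $J_0$, I would iteratively grow a pair of part-sets $\mathcal{S}_A \subseteq L(A_G)$ and $\mathcal{S}_B \subseteq L(B_G)$ by alternating between the two sides. Beginning with $\mathcal{S}_A = \{J_0\}$ and $\mathcal{S}_B = \emptyset$, we repeatedly augment $\mathcal{S}_B$ by all $B$-parts containing an $I$-vertex adjacent to some vertex of an $\mathcal{S}_A$-part, and then augment $\mathcal{S}_A$ analogously using $\mathcal{S}_B$. Because $L$ has finitely many parts, this process stabilizes. The maximality of $I$ together with the minimality of the auxiliary potential should force the invariant that every vertex of every part in $\mathcal{S}_A$ (resp.\ $\mathcal{S}_B$) is adjacent to at least one $I$-vertex whose part lies in $\mathcal{S}_B$ (resp.\ $\mathcal{S}_A$); otherwise, a coordinated cascade of swaps through the alternating structure would yield a partial IT of strictly greater size, or of equal size but smaller potential, contradicting the choice of $I$.

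The concluding step is a counting argument on incidences between vertices of parts and $I$-vertices. Each $I$-vertex lying in an $\mathcal{S}_B$-part contributes at most $D_B$ edges to $A_H$, so at most $D_B|\mathcal{S}_B|$ edges run from $I \cap (\bigcup \mathcal{S}_B)$ to $\bigcup \mathcal{S}_A$; combined with $|\bigcup \mathcal{S}_A| \ge k_A|\mathcal{S}_A|$, this yields one inequality, and symmetrically we obtain another. Careful bookkeeping of these incidences, taking into account that each $I$-vertex inside a part of $\mathcal{S}_A \cup \mathcal{S}_B$ contributes no blocker edge to its own part, should yield the tight inequality $\tfrac{D_A}{k_B} + \tfrac{D_B}{k_A} > 1$, contradicting the hypothesis.

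The main obstacle will be deriving this sharp inequality rather than the weaker $D_AD_B < k_Ak_B$ that a naive counting would give. In particular, the apparently straightforward bounds $k_A|\mathcal{S}_A| \le D_B|\mathcal{S}_B|$ and $k_B|\mathcal{S}_B| \le D_A|\mathcal{S}_A|$ only combine to $D_AD_B \ge k_Ak_B$, which is strictly weaker than the hypothesis under consideration and therefore cannot recover the full strength of Theorem~\ref{thm:main}. Sharpness thus requires careful treatment of the unhit root $J_0$ (and any other unhit parts that may arise during the alternating construction), and precise handling of $I$-vertices sitting inside $\mathcal{S}_A$- and $\mathcal{S}_B$-parts. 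The correct choice of the auxiliary minimality potential, together with a refined description of the alternating structure (possibly as a tree rather than merely a pair of sets), is almost certainly the key ingredient needed to avoid this slack.
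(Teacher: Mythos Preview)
Your proposal has a genuine gap, and you have correctly located it yourself: the naive domination bounds $k_A|\mathcal{S}_A|\le D_B|\mathcal{S}_B|$ and $k_B|\mathcal{S}_B|\le D_A|\mathcal{S}_A|$ only yield $D_AD_B\ge k_Ak_B$, which does not contradict the hypothesis. You speculate that the remedy is ``careful treatment of the unhit root $J_0$'' and a tree structure rather than bare sets, but you do not carry this out, and without it the argument does not close.

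The missing ingredient is exactly what the alternating-tree machinery (i.e.\ Haxell's lemma in the form of the paper's Theorem~\ref{Minimal-IT-lemma}) delivers: one obtains a set $S$ of parts and a set $Z$ of edges of $H[L(S)]$ with $|Z|\le |S|-1$ such that the endpoints of $Z$ dominate all of $L(S)$. Writing $a=|S\cap A_G|$ and $b=|S\cap B_G|$, and noting that each edge of $Z$ has exactly one endpoint on each side of the bipartition, the $B_H$-endpoints of $Z$ number at most $a+b-1$ and dominate at least $ak_A$ vertices in $A_H$, giving $ak_A\le (a+b-1)D_B$; symmetrically $bk_B\le (a+b-1)D_A$. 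Summing these after dividing by $k_A$ and $k_B$ respectively gives
\[
\frac{D_B}{k_A}+\frac{D_A}{k_B}\ \ge\ \frac{a}{a+b-1}+\frac{b}{a+b-1}\ >\ 1,
\]
the desired contradiction. The point is that the \emph{same} $a+b-1$ edges dominate both sides, so the denominators in the two inequalities are equal; this is precisely what your pair of separate bounds $D_B|\mathcal{S}_B|$ and $D_A|\mathcal{S}_A|$ fails to capture. The paper simply quotes Theorem~\ref{Minimal-IT-lemma} as a black box and applies this counting; your approach amounts to re-deriving that lemma inside the proof, which is fine in principle, but you would need to actually establish the tree structure (hence the $|Z|\le |S|-1$ bound) rather than stop at growing $\mathcal{S}_A,\mathcal{S}_B$ as sets.
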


\noindent
We show in Section~\ref{sec:sufficient} that this result is a corollary
to a general result for independent transversals found e.g.
in~\cite{Hax11}. 

In fact, the condition in Theorem~\ref{thm:main} is best possible, as follows.

\begin{thm}\label{thm:sharp}
Let positive integers $k_A$, $k_B$, $D_A$, $D_B$ be such that
$\frac{D_B}{k_A} + \frac{D_A}{k_B} > 1$.
Then there exists a bipartite-cover $H=H(A,B,G,L)$ such that the
maximum degrees in $A_H$ and 
$B_H$ are at most $D_A$ and $D_B$, respectively, and $|L(v)| = k_A$ for all
$v \in A_G$ and $|L(w)| = k_B$ for all $w \in B_G$, and such that $H$ admits no independent transversal with respect to $L$.
\end{thm}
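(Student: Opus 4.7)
The plan is to construct, for arbitrary positive integers $k_A, k_B, D_A, D_B$ satisfying $D_B/k_A + D_A/k_B > 1$, an explicit bipartite-cover $H = H(A, B, G, L)$ meeting the stated list-size and degree bounds but admitting no independent transversal. I would split the argument into two regimes, depending on whether a one-sided ``star'' construction applies: the \emph{unbalanced} case where $D_A \ge k_B$ or $D_B \ge k_A$, and the \emph{balanced} case where $D_A < k_B$ and $D_B < k_A$.

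In the unbalanced case one may assume, without loss of generality, $D_B \ge k_A$. I would take $G$ to be the star $K_{s,1}$ with $B_G = \{v\}$ and $A_G = \{u_1, \dots, u_s\}$, where $s = \lceil k_B/D_A\rceil$, and partition $L(v)$ into $s$ blocks $Y_1, \dots, Y_s$ of size at most $D_A$. In $H$, every vertex of $L(u_i)$ is joined to every vertex of $Y_i$. Each vertex of $A_H$ then has degree $|Y_i|\le D_A$ and each vertex of $B_H$ has degree $k_A\le D_B$. No IT can exist: any $y \in L(v)$ chosen in a transversal falls into some $Y_{i_0}$, so the edge from the chosen $x\in L(u_{i_0})$ to $y$ is present in $H$.

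The balanced case is the heart of the argument and requires multiple parts on both sides. Here I would take $G = K_{s,t}$ with $s = \lceil k_B/D_A\rceil$ and $t = \lceil k_A/D_B\rceil$, and specify for each $(i,j)\in[s]\times[t]$ an edge set $E_{ij}\subseteq L(u_i)\times L(v_j)$ subject to the row-/column-sum conditions $\sum_j|\{b:(a,b)\in E_{ij}\}|\le D_A$ and $\sum_i|\{a:(a,b)\in E_{ij}\}|\le D_B$. The no-IT property is the requirement that for every pair of transversals $\sigma\in\prod_i L(u_i)$, $\tau\in\prod_j L(v_j)$ some $(i,j)$ yields $(\sigma(i),\tau(j))\in E_{ij}$. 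The strict inequality $D_A k_A + D_B k_B > k_A k_B$ provides the extra ``unit of slack'' that should make such a covering scheme possible.

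I expect the main obstacle to be precisely this last design step. The obvious ``disjoint block matching'' attempt, where one partitions each $L(u_i)$ into blocks of size $\le D_B$ and each $L(v_j)$ into blocks of size $\le D_A$ and links matched pairs of blocks, reduces the no-IT condition to the existence of a fixed point in a composition of two functions; adversarial choices of $\sigma, \tau$ realise a derangement and yield an IT. Overcoming this would call for interleaving the partitions, either by allowing overlapping blocks and carefully tracking the degree budget, or by building a small ``base gadget'' (reminiscent of the Szab\'o--Tardos sharpness construction) and combining copies of it in a tensor-product style so as to consume the slack $D_A k_A + D_B k_B - k_A k_B \ge 1$. Once the correct coupling is identified, verification of the degree and list-size bounds is routine, and the absence of an IT follows from a direct set-cover argument on the chosen $E_{ij}$.
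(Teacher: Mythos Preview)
Your unbalanced case is correct and clean. The balanced case, however, is not a proof: you correctly identify that the naive block-matching scheme on $G=K_{s,t}$ fails (it admits an IT whenever the induced self-map of $[s]$ or $[t]$ has a derangement), and then you stop, offering only the vague suggestions of ``interleaving partitions'' or a ``tensor-product style'' gadget. Neither of these is specified, and there is no argument that the single unit of slack $D_Ak_A+D_Bk_B-k_Ak_B\ge 1$ suffices to repair the derangement obstruction within your $K_{s,t}$ framework. As stated, the balanced case is an outline of the difficulty, not of its solution.

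The paper takes a rather different route that sidesteps the combinatorial design problem you set yourself. Rather than fixing $G=K_{s,t}$ and trying to engineer the fibres $E_{ij}$, it builds the cover graph iteratively out of disjoint copies of $K_{D_B,D_A}$, using a simple \emph{join lemma}: if vertex-partitioned graphs $H$ and $J$ each have no IT, then the graph obtained from $H\cup J$ by dissolving one class of $J$ into the classes of $H$ still has no IT. Starting from a single $K_{D_B,D_A}$ (which trivially has no IT), one repeatedly adjoins further copies, each time distributing one side of the new copy into existing classes, steering the $A$-deficit and $B$-deficit down to zero. The strict inequality enters as the positivity of $t=k_AD_A+k_BD_B-k_Ak_B$, which forces the $B$-deficit to drift downward as more $A$-classes are created, guaranteeing termination. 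In this approach the underlying graph $G$ is emphatically \emph{not} $K_{s,t}$; it is whatever the gluing process produces, and that flexibility is what makes the bookkeeping tractable.
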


\noindent
It is worth isolating the symmetric situation where we maintain that $D_A=D_B=D$ and $k_A=k_B=k$; in this case the condition in Theorem~\ref{thm:main} resolves to $k \ge 2D$. In other words, we have the following.
\begin{cor}\label{cor:symmetric}
Any $(2D)$-fold bipartite-cover graph of maximum degree $D$ admits an independent transversal. Moreover, the conclusion may fail if the $2D$ part size condition is relaxed to $2D-1$.
\end{cor}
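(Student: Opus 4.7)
The plan is to derive Corollary~\ref{cor:symmetric} directly as the symmetric specialisation of Theorems~\ref{thm:main} and~\ref{thm:sharp}; no fresh ideas are needed. For the positive half, let $H=H(A,B,G,L)$ be a $(2D)$-fold bipartite-cover of maximum degree $D$, so $|L(v)|=2D$ for every $v\in V(G)$ and the maximum degrees in $A_H$ and $B_H$ are both at most $D$. Plug the parameters $D_A=D_B=D$ and $k_A=k_B=2D$ into Theorem~\ref{thm:main}: the hypothesis becomes $\frac{D_B}{k_A}+\frac{D_A}{k_B}=\frac{D}{2D}+\frac{D}{2D}=1\le 1$, which holds with equality. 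Theorem~\ref{thm:main} then immediately produces an independent transversal of $H$ with respect to $L$.

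For the sharpness half, I would invoke Theorem~\ref{thm:sharp} with $k_A=k_B=2D-1$ and $D_A=D_B=D$. Here $\frac{D_B}{k_A}+\frac{D_A}{k_B}=\frac{2D}{2D-1}>1$ for every $D\ge 1$, so the strict inequality required by Theorem~\ref{thm:sharp} holds. The theorem therefore supplies a bipartite-cover $H=H(A,B,G,L)$ in which $|L(v)|=2D-1$ on both sides (hence a $(2D-1)$-fold cover) and the maximum degrees on each side are at most $D$, yet no independent transversal exists. This exhibits the desired failure under the relaxed part-size bound. Since the whole corollary amounts to specialising the two main theorems, there is essentially no obstacle to overcome; the only thing to verify is the arithmetic identifying $k\ge 2D$ as the exact boundary of $\frac{D}{k}+\frac{D}{k}\le 1$.
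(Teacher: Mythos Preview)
Your proposal is correct and matches the paper's approach exactly: the corollary is stated in the paper simply as the symmetric special case of Theorems~\ref{thm:main} and~\ref{thm:sharp}, with the remark that setting $D_A=D_B=D$ and $k_A=k_B=k$ reduces the condition $\frac{D_B}{k_A}+\frac{D_A}{k_B}\le 1$ to $k\ge 2D$. No separate argument is given (or needed) beyond the arithmetic you have written out.
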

\noindent
This condition coincides with that of a well-known, more general
result of the second author~\cite{Hax95, Hax01}: any $(2D)$-fold cover
graph of maximum degree $D$ is guaranteed to admit an IT. As such, one
may see Theorem~\ref{thm:sharp} as simultaneously a strengthening and
generalisation of a result of Szab\'o and Tardos~\cite{SzTa06} (which
in turn built upon a series of results beginning in the original paper
of Bollob\'as, Erd\H{o}s and Szemer\'edi~\cite{BES75}): there
exists a $(2D-1)$-fold cover graph of maximum degree $D$ that does not
admit an IT.

Recalling the question posed at the beginning, Theorem~\ref{thm:sharp}
and Corollary~\ref{cor:symmetric} show in a wider sense how the bipartite assumption does {\em not} help for the existence of ITs in cover graphs.
Indeed, while the construction of Szab\'o and Tardos is composed of the union of complete bipartite graphs, its partition classes do not align with a bipartition. Corollary~\ref{cor:symmetric} affirms that it is possible to achieve such an alignment in some bipartite construction. Our construction also consists of a union of complete bipartite graphs.
For an indication of the difference, Figure~\ref{fig:constr5:3} depicts the $D=3$ construction in Corollary~\ref{cor:symmetric}, and one can compare it with~\cite[Fig.~1]{SzTa06}.

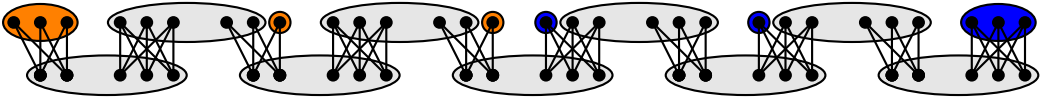
\begin{figure}
\centering
\begin{tikzpicture}[thick, scale=.7]

\foreach \x in {1,3}{
\begin{scope}[yscale=.25] 
\draw[fill=black!10!white] (3.75+2*\x,4) circle (1.48);
\end{scope}
}

\foreach \x in {5,7}{
\begin{scope}[yscale=.25] 
\draw[fill=black!10!white] (4.25+2*\x,4) circle (1.48);
\end{scope}
}

\begin{scope}[yscale=0.5] 
\draw[fill=orange] (3,2) circle (0.7);
\end{scope}

\draw[fill=orange] (7.5,1) circle (0.2);
\draw[fill=orange] (11.5,1) circle (0.2);
\draw[fill=blue] (16.5,1) circle (0.2);
\draw[fill=blue] (12.5,1) circle (0.2);

\begin{scope}[yscale=0.5] 
\draw[fill=blue] (21,2) circle (0.7);
\end{scope}

\foreach \x in {1,3,...,9}{
\begin{scope}[yscale=.25] 
\draw[fill=black!10!white] (2.25+2*\x,0) circle (1.5);
\end{scope}
}

\foreach \x in {1,3,...,9}{
\foreach \y in {1,2,3}{
\foreach \z in {2,3}{
\draw[thick] (2*\x+0.5*\z,0) -- (2*\x+0.5*\y,1);
\draw[fill] (2*\x+0.5*\z,0) circle (0.1);	
}
\draw[fill] (2*\x+0.5*\y,1) circle (0.1);	
}
}

\foreach \x in {2,4,...,10}{
\foreach \y in {1,2,3}{
\foreach \z in {1,2,3}{
\draw[thick] (2*\x+0.5*\z,0) -- (2*\x+0.5*\y,1);
}
\draw[fill] (2*\x+0.5*\y,1) circle (0.1);	
\draw[fill] (2*\x+0.5*\y,0) circle (0.1);	
}
}

\end{tikzpicture}
\caption{A bipartite-cover graph with maximum degree $3$ and partition classes of size $5$ with no IT. The partition classes are represented by ovals, except that the vertices contained in orange are all a single partition class, as are the vertices contained in blue.}\label{fig:constr5:3}
\end{figure}

Let us briefly discuss what happens in the special case of correspondence-covers, as explored in~\cite{CaKa22}. Given a cover graph $H$ of $G$ with respect to $L$, we say $H$ is a {\em correspondence-cover} if the bipartite subgraph induced between $L(v)$ and $L(v')$ is a matching for any $vv'\in V(G)$. In other words, the maximum degree induced between two parts of $H$ with respect to $L$ is at most $1$. Clearly the class of all correspondence-covers strictly includes that of all list-covers. The next result follows from a `coupon collector' argument, and this is counterbalanced by a simple probabilistic construction (that was given, for example, in~\cite{KPV05}).

\begin{thm}[\cite{CaKa22}]\label{thm:coupon}
For any $\varepsilon>0$, the following holds for all $D$ sufficiently large.
Any $\bceil{(1+\varepsilon)\frac{D}{\log D}}$-fold bipartite correspondence-cover graph of maximum degree $D$ admits an independent transversal. Moreover, the conclusion fails if the $(1+\varepsilon)$ factor is weakened to a $(\frac12-\varepsilon)$ factor.
\end{thm}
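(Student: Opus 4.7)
I would use a random-colouring argument followed by the Lov\'asz Local Lemma (LLL). For each $v \in A_G$, independently choose $c(v) \in L(v)$ uniformly at random. For $w \in B_G$ and $x \in L(w)$, call $x$ \emph{available} if no neighbour $(v,y)$ of $x$ in $H$ satisfies $c(v) = y$; let $U_w$ count the available colours in $L(w)$ and let $A_w$ denote the event that $U_w = 0$. The correspondence-cover hypothesis implies that, for each $v \in N_G(w)$, at most one colour $y_v^x \in L(v)$ is paired with any specified $x \in L(w)$, so the $\deg_H(x) \le D$ events $\{c(v) = y_v^x\}$ across the neighbours of $x$ are independent, yielding $\Pr[x \text{ available}] \ge (1-1/k)^D$ where $k = \bceil{(1+\varepsilon)D/\log D}$. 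Hence $\mathbb{E}[U_w] \ge k(1-1/k)^D \ge \Omega(k e^{-D/k}) \ge \Omega(D^{\varepsilon/(1+\varepsilon)}/\log D) = \omega(\log D)$. Observing that $U_w$ is $2$-Lipschitz in the $c(v)$'s and certifiable (any lower bound $U_w \ge s$ is witnessed by at most $sD$ variables), Talagrand's inequality yields
\[
\Pr[A_w] \le \exp\!\big(-\Omega(\mathbb{E}[U_w])\big) = D^{-\omega(1)}.
\]
Each $A_w$ depends on at most $kD$ of the $c(v)$'s, so the dependency degree of the event graph is at most $(kD)^2 = O(D^4/\log^2 D)$, and the symmetric LLL condition is satisfied for $D$ large. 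Consequently, with positive probability every $L'(w) := \{x \in L(w) : x \text{ available}\}$ is non-empty, and choosing any $c(w) \in L'(w)$ for each $w \in B_G$ completes the desired independent transversal.

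\textbf{Negative direction.} Take $G = K_{D,D}$ with lists of size exactly $k = \bceil{(1/2-\varepsilon)D/\log D}$. For each edge $uv \in E(G)$ independently, draw a uniformly random perfect matching between $L(u)$ and $L(v)$ to serve as the $H$-edges between these lists; the result is a bipartite correspondence-cover of maximum degree exactly $D$. For any fixed choice $c = (c(v))_v$ with $c(v) \in L(v)$, the pair $(c(u),c(v))$ lies in the matching on $uv$ with probability $1/k$, independently over the $D^2$ edges, so $\mathbb{E}[\#\text{ITs}] = k^{2D}(1-1/k)^{D^2}$. Taking logarithms,
\[
\log \mathbb{E}[\#\text{ITs}] = 2D\log k - \frac{D^2}{k} + O(\log^2 D) = \left(2 - \frac{2}{1-2\varepsilon}\right)\! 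D\log D + O(D\log\log D),
\]
with leading coefficient $-4\varepsilon/(1-2\varepsilon) < 0$. The expectation therefore tends to $0$, so for $D$ sufficiently large some realization of the construction admits no independent transversal, as required.

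\textbf{Main obstacle.} The critical technical step is the concentration bound $\Pr[A_w] \le D^{-\omega(1)}$, since a naive Chebyshev/Paley–Zygmund argument produces only $\Pr[A_w] = O(1/\mathbb{E}[U_w])$, which is too weak to absorb the dependency degree in the LLL. The correspondence-cover hypothesis is exactly what makes a Talagrand-type (or Janson-type) bound applicable: the matching structure ensures that each $c(v)$ can block at most one colour in $L(w)$, which simultaneously gives the correct lower bound on $\mathbb{E}[U_w]$ and the small Lipschitz constant needed. In a general bipartite cover a single $c(v)$ could block up to $D$ colours in $L(w)$, destroying both features—this is reflected in the fact that the symmetric case of Theorem~\ref{thm:main} yields only the much weaker threshold $k \ge 2D$.
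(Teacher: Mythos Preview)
The paper does not itself prove Theorem~\ref{thm:coupon}; it is quoted from~\cite{CaKa22}, with only the hints that the positive direction ``follows from a `coupon collector' argument'' and the negative direction is ``a simple probabilistic construction (that was given, for example, in~\cite{KPV05})''. Your negative direction is exactly such a first-moment construction and is correct.

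For the positive direction, your overall architecture---randomly colour $A_G$, show each $L(w)$ retains an available colour with very high probability, then apply the Local Lemma---is sound. However, the concentration step as written fails. With $U_w$ being $2$-Lipschitz and $(sD)$-certifiable, Talagrand's inequality only yields
\[
\Pr\bigl[U_w \le \mathbb{E}[U_w] - t\bigr] \;\le\; \exp\!\Bigl(-\Omega\bigl(t^2 / (D\,\mathbb{E}[U_w])\bigr)\Bigr),
\]
so taking $t=\mathbb{E}[U_w]$ gives exponent $-\Omega(\mathbb{E}[U_w]/D)$. Since $\mathbb{E}[U_w]=\Theta\bigl(D^{\varepsilon/(1+\varepsilon)}/\log D\bigr)=o(D)$, this exponent tends to $0$, not $-\infty$; Talagrand with certificate size $sD$ simply does not produce the bound $\exp(-\Omega(\mathbb{E}[U_w]))$ you assert. (The same obstruction defeats the variant where one applies Talagrand to the blocked count $k-U_w$ with certificate size $s$: then the denominator is $\mathbb{E}[k-U_w]\sim k$, and one needs $\mathbb{E}[U_w]\gg\sqrt{k}$, which again fails for small $\varepsilon$.)

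The bound $\Pr[A_w]=D^{-\omega(1)}$ you need \emph{is} true, and this is exactly where the coupon-collector structure enters. Writing $I_x=\mathbf{1}[x\text{ available}]$, each choice $c(v)$ lands in at most one ``bin'' of $L(w)$ (the colour matched to $c(v)$, if any), so $\{I_x:x\in L(w)\}$ are empty-bin indicators in a balls-and-bins experiment and are therefore negatively associated. A Chernoff bound for negatively associated Bernoulli variables then gives $\Pr[U_w=0]\le\exp(-\Omega(\mathbb{E}[U_w]))$ directly, after which your LLL step goes through unchanged. So the fix is to replace the Talagrand appeal by this negative-association argument (which is the ``coupon collector'' content alluded to in the paper).
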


\noindent
One reason for highlighting this case is that it could be interesting
to gradually tune (between $1$ and $D$) the condition on maximum
degree induced between two parts of $H$ with respect to $L$, in order
to gain a better understanding of the transition between the
$\Theta(D/\log D)$ (probabilistic) part-size condition in
Theorem~\ref{thm:coupon} and the $\Theta(D)$ condition in
Corollary~\ref{cor:symmetric} (which was originally established
in~\cite{Hax95, Hax01}). 

In another setting `between' the correspondence-cover and general
cover problems, the following result~\cite[Prop~1.7]{CaKa22} was found
to be a direct corollary of the result of~\cite{Hax95, Hax01}. 

\begin{prop}[\cite{CaKa22}]\label{prop:CaKa22}
Let $H=H(A,B,G,L)$ be a bipartite-cover.
Let positive integers $k_A$, $k_B$, and $D$ be such that
$k_A=2D^2$ and $k_B=2$. If $H$ has maximum degree $D$, and
$|L(v)| \ge k_A$ for all
$v \in A_G$ and $|L(w)| = k_B$ for all $w \in B_G$,
and no vertex of $A_H$ is adjacent to both vertices of $L(w)$ for some
$w\in B_G$, then $H$ admits an independent transversal with respect to $L$.
\end{prop}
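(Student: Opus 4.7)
The plan is to reduce Proposition~\ref{prop:CaKa22} to the basic Haxell theorem (the $|V_i| \ge 2\Delta$ criterion from~\cite{Hax95, Hax01}) by encoding the $B$-side constraints into an auxiliary graph on the $A$-side. Concretely, I would define $\tilde G$ on vertex set $A_H$, partitioned by $\{L(v) : v \in A_G\}$, with $aa' \in E(\tilde G)$ precisely when there exists $w \in B_G$ such that $a$ and $a'$ are $H$-adjacent to the two different vertices of $L(w) = \{w_1, w_2\}$.

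The main structural step is to verify the correspondence: $H$ admits an IT with respect to $L$ if and only if $\tilde G$ admits an IT with respect to its $A$-partition. The forward direction is immediate, since any edge $a_v a_{v'} \in E(\tilde G)$ arising from some $w$ would force the chosen $b_w \in L(w)$ to be $H$-adjacent to $a_v$ or $a_{v'}$. For the backward direction, given an IT $T_A = \{a_v\}$ of $\tilde G$, its independence forces that for each $w \in B_G$ all $a_v$ with an $H$-neighbor in $L(w)$ are $H$-adjacent to the same $w_i$; one then sets $b_w := w_{3-i}$ (or arbitrary if no such $a_v$ exists) to extend $T_A$ to an IT of $H$.

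It remains to bound $\Delta(\tilde G)$. Fix $a \in A_H$ and let $F(a) := \{w \in B_G : a \text{ has an } H\text{-neighbor in } L(w)\}$. The special condition implies that each $B$-neighbor of $a$ lies in a distinct $B$-part, so $|F(a)| = |N_H(a) \cap B_H| \le D$; moreover, for each $w \in F(a)$ there is a unique $\sigma_a(w) \in \{1,2\}$ with $a \sim_H w_{\sigma_a(w)}$, and the $\tilde G$-neighbors of $a$ arising from $w$ lie in $N_H(w_{3-\sigma_a(w)})$, of size $\le D$. Hence $\Delta(\tilde G) \le D^2$, and since $|L(v)| \ge 2D^2 = 2\Delta(\tilde G)$, the basic Haxell theorem gives an IT of $\tilde G$, which transfers to one of $H$. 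The main obstacle here is the degree bound: the special condition is essential not for the count $|F(a)| \le D$ but for restricting the contribution of each $w \in F(a)$ to only $\le D$ (rather than $\le 2D$, which would push $\Delta(\tilde G)$ up to $2D^2$ and just fail the Haxell threshold $|L(v)| \ge 2\Delta$).
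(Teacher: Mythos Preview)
Your argument is correct and matches the approach the paper attributes to~\cite{CaKa22}: the paper does not reprove the proposition but states only that it is ``a direct corollary of the result of~\cite{Hax95, Hax01}'', and your auxiliary graph $\tilde G$ is exactly the reduction that makes this work. Indeed, your $\tilde G$ is precisely the inverse of the $H'(J)$ construction the paper uses in Section~\ref{df} to prove sharpness, so the two directions fit together as expected.

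One small quibble with your closing remark: the special condition is in fact used in \emph{two} places, not just in the degree bound. In the backward direction of the correspondence you implicitly need that no $a_v\in T_A$ is $H$-adjacent to both $w_1$ and $w_2$; otherwise there would be no valid choice of $b_w$ regardless of how the other $a_{v'}$ behave. (You do use it there, via the existence of the unique $\sigma_a(w)$, but your final sentence suggests the condition matters only for sharpening $2D$ to $D$ in the degree count.) This is cosmetic and does not affect the validity of the proof.
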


\noindent
In Section~\ref{df}, we give a construction to show how the value
$2D^2$ cannot be lowered to $2D^2-1$ for the same conclusion.

Let us conclude by returning to the original motivation and a related challenge.
With Corollary~\ref{cor:symmetric} and Theorem~\ref{thm:coupon} in mind, the following `colour-degree' generalisation of Conjecture~\ref{conj:AlKr} seems worth investigating.

\begin{conj}\label{conj:AlKrcolourdegree}
There exists some $C\ge 1$ such that for every $D\geq 2$, every $\bceil{C\log_2
D}$-fold bipartite list-cover graph of maximum degree $D$ admits an independent
transversal. 
\end{conj}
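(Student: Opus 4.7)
The plan is to attack Conjecture~\ref{conj:AlKrcolourdegree} by an iterated semi-random sparsification procedure culminating in a single application of Theorem~\ref{thm:main}. The starting observation is that Theorem~\ref{thm:main} converts the existence question into a concrete degree/part-size inequality, so it suffices to preprocess the given bipartite list-cover $H$ into a sub-bipartite-cover $H'$ whose maximum degrees $D_A'$, $D_B'$ and surviving part sizes $k_A'$, $k_B'$ satisfy $D_A'/k_B' + D_B'/k_A' \le 1$; any independent transversal of $H'$ then lifts immediately to one in $H$.

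The first phase is a random sparsification. For each vertex of $H$ independently mark it with probability $p$ of order $(\log D)/D$, and discard every marked vertex that has a marked neighbour in $H$; retain the surviving marked vertices together with whichever unmarked vertices are needed to preserve the list-cover structure. Chernoff-type concentration combined with the Lov\'asz Local Lemma should show that, with positive probability, every part of the partition retains $\Omega(\log D)$ survivors while the maximum degree inside the surviving cover drops to $O(\log D)$ as well. Bipartiteness is preserved automatically by restriction. If one pass does not already meet the hypothesis of Theorem~\ref{thm:main}, iterate a constant number of times, each round reducing the effective maximum degree multiplicatively while shrinking part sizes by at most a constant factor. The asymmetry in Theorem~\ref{thm:main} invites a refinement in which one alternates which side of the bipartition is sparsified more aggressively, possibly coupled with the degree-between-parts viewpoint that sits between Theorem~\ref{thm:coupon} and Corollary~\ref{cor:symmetric}.

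The main obstacle, and it is formidable, is the exponential gap between what current semi-random tools deliver and the target bound. The Johansson-style arguments underlying the best known progress toward Conjecture~\ref{conj:AlKr} exploit only triangle-freeness and yield $k = \Theta(D/\log D)$; closing the gap to $k = \Theta(\log D)$ appears to require a genuinely new way of using the bipartite structure of the underlying graph $G$, beyond what follows from the triangle-freeness of $H$. Since Conjecture~\ref{conj:AlKr} has itself resisted such improvements for nearly three decades, and since its colour-degree strengthening Conjecture~\ref{conj:AlKrcolourdegree} grants the adversary strictly more freedom in the design of $H$, any execution of the plan above will very likely have to be accompanied by a new structural insight. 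A natural intermediate target, avoiding a direct assault on Conjecture~\ref{conj:AlKr}, is to interpolate between Theorem~\ref{thm:coupon} and Corollary~\ref{cor:symmetric} by imposing a bound on the maximum degree induced between two parts of $H$, extending the kind of hypothesis appearing in Proposition~\ref{prop:CaKa22}; this subproblem looks more tractable and might pinpoint exactly where the bipartite assumption gains real traction.
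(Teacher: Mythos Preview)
The statement you are attempting to prove is Conjecture~\ref{conj:AlKrcolourdegree}, which the paper explicitly leaves open; there is no proof in the paper to compare against. Immediately after stating the conjecture, the paper proves only that it is \emph{equivalent} to Conjecture~\ref{conj:AlKr} (the Alon--Krivelevich conjecture), up to the value of the constant. In particular, a proof of Conjecture~\ref{conj:AlKrcolourdegree} would resolve a problem that has been open since 1998.

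Your proposal is candid about this: you correctly identify that the semi-random sparsification you sketch can at best reproduce the Johansson-type bound $k=\Theta(D/\log D)$, which is exponentially weaker than the target $k=\Theta(\log D)$. The concrete gap is in the first phase. Marking vertices with probability $p\approx(\log D)/D$ and discarding marked vertices with marked neighbours leaves each part with $\Theta(kp)$ survivors in expectation, which is $\Theta((\log D)^2/D)$ when $k=\Theta(\log D)$---far less than one vertex per part, so the claimed ``$\Omega(\log D)$ survivors per part'' cannot hold. The iteration does not rescue this: each round can reduce degree by at most a $\Theta(\log D)$ factor while also shrinking parts, so you never reach the regime $k'\geq 2D'$ needed for Theorem~\ref{thm:main} unless you start with $k$ of order $D/\log D$. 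Your final paragraph already concedes this, so what you have written is a research programme rather than a proof, and the honest assessment is that the conjecture remains open.
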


\noindent
To round out the story, we point out how Conjectures~\ref{conj:AlKr} and~\ref{conj:AlKrcolourdegree} are essentially equivalent.

\begin{thm}
If Conjecture~\ref{conj:AlKr} is true for some constant $C \ge 1$, then Conjecture~\ref{conj:AlKrcolourdegree} is true for some constant $C'\ge 1.$
The same implication holds when $C$ and $C'$ are both replaced by $1+o(1)$ (as $\Delta,D\to\infty$).
\end{thm}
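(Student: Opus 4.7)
The plan is to derive Conjecture~\ref{conj:AlKrcolourdegree} from Conjecture~\ref{conj:AlKr} via a sparsification reduction: the key observation to exploit is that missing edges of a list-cover translate into edges one need not worry about when properly list-colouring the underlying bipartite graph. Suppose Conjecture~\ref{conj:AlKr} holds with constant $C\ge 1$, and let $H=H(A,B,G,L)$ be an arbitrary bipartite list-cover with $\Delta(H)\le D$ (where $D\ge 2$) and $|L(v)|=k := \lceil C'\log_2 D\rceil$ for every $v\in V(G)$, with $C'$ to be chosen large enough in terms of $C$. I would define the spanning bipartite subgraph $G'\subseteq G$ whose edges are those $uv\in E(G)$ for which $H$ contains at least one edge between $L(u)$ and $L(v)$. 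A simple double count then yields $\Delta(G')\le kD$: each vertex $v$ has at most $kD$ edges of $H$ incident to $L(v)$, and each edge of $G'$ through $v$ uses up at least one.

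Next I would apply Conjecture~\ref{conj:AlKr} to $G'$ with the original list-assignment $L$: since $\Delta(G')\le kD$, a proper $L$-colouring $c$ of $G'$ exists as long as $k\ge C\log_2(kD)$ (the degenerate case $\Delta(G')\le 1$ needs only $k\ge 2$, which will be automatic). Such a $c$ then lifts to an IT $\{(v,c(v)):v\in V(G)\}$ of $H$, as follows. For any edge $uv\in E(G)\setminus E(G')$ there is no $H$-edge between $L(u)$ and $L(v)$ by construction; for $uv\in E(G')$, every $H$-edge between $L(u)$ and $L(v)$ must pair identical colours (the defining property of a list-cover), and $c(u)\ne c(v)$ by properness. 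To pin down $C'$, I would rearrange $k\ge C\log_2(kD)$ as $(C'-C)\log_2 D\ge C(\log_2 C' + \log_2\log_2 D)$; for any fixed $C'>C$ this holds once $D$ exceeds some threshold $D_0=D_0(C,C')$. For the finitely many values $2\le D<D_0$, I would fall back on Corollary~\ref{cor:symmetric}, which guarantees an IT whenever $k\ge 2D$, and inflate $C'$ so that $C'\ge 2D_0$ forces $\lceil C'\log_2 D\rceil\ge 2D$ throughout $[2,D_0)$.

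For the $1+o(1)$ strengthening, I would rerun the reduction with $C=C(\Delta)=1+o(1)$ as $\Delta\to\infty$ and set $k=(1+\varepsilon(D))\log_2 D$. Since $\log_2(kD)=\log_2 D + \log_2\log_2 D + O(1)$, the requirement $k\ge(1+o(1))\log_2(kD)$ simplifies to $\varepsilon(D)\log_2 D\ge(1+o(1))\log_2\log_2 D$, which is satisfied by, for instance, $\varepsilon(D):=2\log_2\log_2 D/\log_2 D$, a function that tends to $0$ as $D\to\infty$. The conceptually forced step is the sparsification of $G$ to $G'$, after which the list-cover problem has become a bounded-degree list-colouring problem tailor-made for Conjecture~\ref{conj:AlKr}; the only real obstacle is the minor constant bookkeeping around the ceiling, the residual small-$D$ cases (absorbed via Corollary~\ref{cor:symmetric}), and the $\log\log$-slack verification in the $1+o(1)$ regime.
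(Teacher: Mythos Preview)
Your proposal is correct and follows essentially the same route as the paper: your sparsified graph $G'$ is exactly what the paper calls the ``minimal covered graph,'' the bound $\Delta(G')\le kD$ and the small-$D$ fallback via Corollary~\ref{cor:symmetric} are identical, and both arguments finish by applying Conjecture~\ref{conj:AlKr} to this graph. The only cosmetic difference is in the constant bookkeeping, where the paper verifies $k\ge C\log_2\Delta$ via a short contradiction argument using $\sqrt{\Delta}\ge C\log_2\Delta$, whereas you manipulate the inequality $(C'-C)\log_2 D\ge C(\log_2 C'+\log_2\log_2 D)$ directly.
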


\begin{proof}
Assume Conjecture~\ref{conj:AlKr} is true for some $C \ge 1$.
We choose $D_0\ge 2$ such that $\sqrt D \ge C \log_2 D$ for every $D
\ge D_0$, and set $C'=2D_0 \ge 2C^2\ge2C$.

Let $D\geq 2$ be given. Set $k=C' \log_2 D$ and
let $H=H(A,B,G,L)$ be an
arbitrary $k$-fold bipartite list-cover of maximum degree $D \ge
2$. We will show that $H$ has an IT.
If $D \le D_0,$ then $k \ge C' \ge 2D$, and
Corollary~\ref{cor:symmetric} implies that $H$ has an independent
transversal as desired. We may therefore assume $D> D_0.$
By definition, the maximum degree $\Delta$ of the minimal covered graph $G$
satisfies $D_0 \le D \le \Delta \le kD.$ 
By the choice of $D_0$ it then follows that $\sqrt \Delta \ge C \log_2
\Delta$.

We claim that $k\geq  C \log_2 \Delta$. To see this, assume the contrary,
and note then that $\Delta < D\cdot C\log_2\Delta$, and hence $D >
\frac{\Delta}{C \log_2 \Delta} \ge \sqrt{\Delta}$. 
But $D> \sqrt{\Delta}$ and $C'\ge 2C$ imply that $k=C' \log_2 D \ge
C\log_2 \Delta$, which is a contradiction. 
Therefore $k\ge  C \log_2 \Delta$ as claimed.

Now consider the maximal list-cover $H'\supseteq H$ of $G$ with respect to $L$.
Our claim that $k\geq  C \log_2 \Delta$, together with the assumption that
Conjecture~\ref{conj:AlKr} is true with the 
constant $C$, imply that $H'$ admits an independent transversal with
respect to $L$. Hence the same is true for $H$, as required.

The proof for the $1+o(1)$ version proceeds analogously.
Fix $\varepsilon>0$.
Now one can take $D_0$ sufficiently large such that 
$(1+\varepsilon) \log_2 D \le D^{\varepsilon}$ for all $D\geq D_0$, and such that Conjecture~\ref{conj:AlKr} is true with $1+\varepsilon$ whenever $\Delta \ge D_0.$
Then for any $k$-fold bipartite list-cover $H$ of maximum degree $D \ge 2$, where $k\ge  \frac{1+\varepsilon}{1-\varepsilon} \log_2 D$, we conclude $H$ has an independent transversal by the same strategy.
\end{proof}

\noindent
In a similar way, non-trivial progress on Conjecture~\ref{conj:AlKr} may imply non-trivial progress on Conjecture~\ref{conj:AlKrcolourdegree}.
Conversely, lower bound constructions related to Conjecture~\ref{conj:AlKrcolourdegree} may directly yield corresponding constructions related to Conjecture~\ref{conj:AlKr}.

\section{A sufficient condition}\label{sec:sufficient}

In this section, we derive Theorem~\ref{thm:main}.

We say that a set $U$ of vertices of a graph $G$ {\it dominates} the
set $W\subseteq V(G)$ if every vertex of
$W$ has a neighbour in 
$U$. (This is a somewhat nonstandard use of the term since, contrary to the
most common usage, here we
require each vertex of $U\cap W$ to have a neighbour in $U$.) 
Theorem~\ref{thm:main} is a straightforward consequence of
the following result, which is implicit in the proof of~\cite{Hax95,
  Hax01} and stated explicitly e.g.~\cite{AhBeZi} (see Theorem 3.5 therein, which is written in slightly different language). See also e.g.~\cite{BeHaSz, Hax11} for a stronger statement.

\begin{thm} \label{Minimal-IT-lemma}
Let $H=(V_H,E_H)$ be a cover graph of some graph $G=(V_G,E_G)$ with
respect to $L$, that has no IT.
Then there exists a subset $S\subseteq V_G$ and a set $Z$ of edges of the subgraph of $H$ induced by $L(S)$ such that
$|Z|\le |S|-1$, and $V_H(Z)$ dominates $L(S)$.
\end{thm}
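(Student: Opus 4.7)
My approach is induction on $|V_G|$. The base case handles the situation where $|L(v)|=0$ for some $v$: take $S=\{v\}$ and $Z=\emptyset$, so that $|Z|=0=|S|-1$ and $V_H(Z)=\emptyset$ dominates $L(S)=\emptyset$ vacuously. Otherwise, if $|V_G|=1$ and $L(v)\ne\emptyset$, any vertex of $L(v)$ is an IT of $H$, contradicting the hypothesis. For the inductive step with all parts non-empty, pick any $v_0\in V_G$. For each $x\in L(v_0)$, define the reduced cover graph $H_x$ of $G\setminus\{v_0\}$ by setting $L_x(v):=L(v)\setminus N_H(x)$ for $v\ne v_0$. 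Any IT $J$ of $H_x$ combined with $x$ would form an IT of $H$, so no $H_x$ has an IT. Induction then provides, for each $x\in L(v_0)$, witnesses $S_x\subseteq V_G\setminus\{v_0\}$ and $Z_x\subseteq E(H[L(S_x)\setminus N_H(x)])$ with $|Z_x|\le |S_x|-1$ and $V_H(Z_x)$ dominating $L(S_x)\setminus N_H(x)$.

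There is an easy case: if some $x^*\in L(v_0)$ satisfies $L(S_{x^*})\cap N_H(x^*)=\emptyset$, then $V_H(Z_{x^*})$ already dominates $L(S_{x^*})$ in $H$, and $(S,Z):=(S_{x^*},Z_{x^*})$ is the desired witness (we do not even need $v_0\in S$). Otherwise, every $x\in L(v_0)$ has some neighbor $y_x\in L(S_x)\cap N_H(x)$, and the natural attempt is to set $S:=\{v_0\}\cup\bigcup_x S_x$ and $Z:=\bigcup_x Z_x\cup\{\{x,y_x\}:x\in L(v_0)\}$. Domination of $L(S)$ is then easily verified: each $x\in L(v_0)$ has its neighbor $y_x\in V_H(Z)$; each $z\in L(S_x)\cap N_H(x)$ has $x\in V_H(Z)$ as a neighbor; and each $z\in L(S_x)\setminus N_H(x)$ has a neighbor in $V_H(Z_x)\subseteq V_H(Z)$.

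The main obstacle is the counting: this naive construction only guarantees $|Z|\le\sum_x|S_x|$ edges, whereas $|S|-1=|\bigcup_x S_x|$, so the desired bound $|Z|\le|S|-1$ fails whenever the $S_x$'s overlap. To resolve this, I would process $L(v_0)$ sequentially: pick an initial $x_1\in L(v_0)$ and start with $(S_{x_1},Z_{x_1})\cup\{\{x_1,y_{x_1}\}\}$; for each subsequent $x\in L(v_0)$ that is already dominated by the current $V_H(Z)$, do nothing; and for each remaining $x$, absorb only the minimal sub-structure from $(S_x,Z_x)$ needed to extend the domination, adding at most one new edge per newly added part. This balanced ``part-for-edge'' growth maintains $|Z|\le|S|-1$ as an invariant. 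An alternative route, closer to Haxell's original argument, would be to start from a maximum partial IT covering $T\subsetneq V_G$ and construct $(S,Z)$ via an iterated swap procedure: each newly uncovered part contributes exactly one swap edge, so that $|Z|=|S|-1$ holds by construction. In either approach, the delicate step is controlling the merging so that each new part contributes only one new edge; that is where the argument's real technical content lies.
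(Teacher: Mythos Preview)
The paper does not actually prove this theorem: it is quoted from the literature (implicit in \cite{Hax95,Hax01}, explicit in \cite{AhBeZi}), so there is no in-paper argument to compare against. What matters, then, is whether your proposal stands on its own.

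It does not, and you have essentially said so yourself. Your inductive scheme is natural, and you correctly locate the obstruction: the naive union $S=\{v_0\}\cup\bigcup_x S_x$, $Z=\bigcup_x Z_x\cup\{\{x,y_x\}\}$ can overshoot $|S|-1$ whenever the $S_x$ overlap. But your proposed ``sequential processing'' fix does not repair this. Concretely: after initialising with $S=\{v_0\}\cup S_{x_1}$ and $Z=Z_{x_1}\cup\{\{x_1,y_{x_1}\}\}$ (so $|Z|=|S|-1$), take the next undominated $x_2$. Its only available neighbour $y_{x_2}$ lies in $L(S_{x_2})$, and if $S_{x_2}\subseteq S_{x_1}$ (nothing prevents this), then adding the edge $\{x_2,y_{x_2}\}$ raises $|Z|$ by one while adding no new part to $S$, destroying the invariant. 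Your phrase ``absorb only the minimal sub-structure \ldots adding at most one new edge per newly added part'' is precisely the assertion that needs proving, not a mechanism that achieves it; in the situation just described there is no sub-structure of $(S_{x_2},Z_{x_2})$ that buys you domination of $x_2$ for free.

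The second route you sketch---start from a maximum partial IT and grow an alternating ``swap'' tree, so that each newly reached part contributes exactly one edge---is Haxell's original argument and is the one that works. The whole content of the theorem lies in carrying out that tree-growing carefully (maintaining, at each stage, that if the process halts without covering a new part then the current tree yields the required $S$ and $Z$). As written, your proposal identifies the correct difficulty and the correct cure, but does not execute either; the inductive route should be abandoned and the swap-tree argument actually written out.
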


\begin{proof}[Proof of Theorem~\ref{thm:main}]
Suppose $H$ is a counterexample. By
Theorem~\ref{Minimal-IT-lemma}, there exist some $a$ partition classes
of $A_H$ and
$b$ partition classes of $B_H$, and a set $Z$ of edges of size at most
$a + b - 1$ whose end-vertices dominate the union of these $a+b$ partition
classes. The end-vertices of $Z$ dominate at most $(a + b -
1)D_B$ vertices in $A_H$, while the $a$ partition classes contain at least
$ak_A$ vertices. This implies that $ak_A \le (a + b - 1)D_B$. Similarly, considering $B_H$,
we have $bk_B \le (a + b - 1)D_A$. But then 
\begin{align*}
	\frac{D_A}{k_B} + \frac{D_B}{k_A} \ge \frac{b}{a + b - 1} +
        \frac{a}{a + b - 1} > 1, 
\end{align*}
contradicting the hypothesis.
\end{proof}

\section{Sharpness of the condition}

In this section, we have chosen, for ease of notation, to drop reference to the `covered' graph and the function $L$, and instead refer mainly to graphs with vertex partitions.

To prove Theorem~\ref{thm:sharp}, we will make frequent use of
the following lemma, which was also used in \cite{HaxWd2} and explored in greater detail in \cite{HaxWd1}.

\begin{lem} \label{join-bipartite}
Let $H$ and $J$ be disjoint vertex-partitioned graphs with no IT and
$m(H), m(J) \ge 1$ partition classes, respectively. Construct a vertex
partition of the graph $G = H \cup J$ with $m(H) + m(J) - 1$ classes
by choosing one partition class of $J$ and arbitrarily distributing
its vertices into the partition classes of $H$. Then $G$ has no
IT. 
\end{lem}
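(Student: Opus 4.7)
My plan is a direct proof by contradiction using the structure of how the partition class of $J$ was redistributed. Suppose for contradiction that $G$ has an independent transversal $T$. Note that $V(G) = V(H) \cup V(J)$, that $E(G) = E(H) \cup E(J)$ (since $H$ and $J$ are disjoint), and that the $m(H) + m(J) - 1$ partition classes of $G$ consist of (i) the $m(H)$ ``augmented'' classes of $H$, each obtained by possibly adding some vertices of the distinguished class $C$ of $J$, and (ii) the $m(J) - 1$ remaining classes of $J$.

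From (ii), $T$ picks one vertex from each class of $J$ other than $C$. From (i), for each augmented $H$-class, $T$ picks either a vertex of $V(H)$ or a vertex originally from $C$. Let $k$ denote the number of augmented $H$-classes from which $T$ selects a vertex of $C$. The case analysis then splits naturally on whether $k = 0$.

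If $k = 0$, then $T \cap V(H)$ contains exactly one vertex from each of the $m(H)$ classes of $H$, and is independent in $H$ since $T$ is independent in $G$ and $E(H) \subseteq E(G)$. Thus $T \cap V(H)$ is an IT of $H$, contradicting the assumption on $H$. If $k \ge 1$, pick any vertex $c \in T \cap C$, which exists by definition of $k$. Consider the set $T' = \{c\} \cup (T \cap V(J) \setminus C)$. This contains exactly one vertex from $C$ and exactly one vertex from each other class of $J$, so $T'$ is transversal to the partition of $J$. Moreover $T' \subseteq T \cap V(J)$, so $T'$ is independent in $G$, hence in $J$. Thus $T'$ is an IT of $J$, contradicting the assumption on $J$.

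There is no real obstacle here; the only subtlety to flag is that $k$ may exceed $1$, i.e.\ $T$ may select several vertices of $C$, but this does not hurt the argument because we only need a single such vertex to combine with the $m(J)-1$ other $J$-picks to form an IT of $J$. The proof uses only the disjointness of $H$ and $J$ (to conclude $V(H)$-independence and $V(J)$-independence separately) and the described shape of the new partition.
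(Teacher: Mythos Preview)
Your proof is correct and follows essentially the same approach as the paper's. The only cosmetic difference is that the paper phrases the case split via a pigeonhole count on $|T|=m(H)+m(J)-1$ (forcing either $|T\cap V(H)|\ge m(H)$ or $|T\cap V(J)|\ge m(J)$), whereas you split directly on whether any selected vertex lies in the distributed class $C$; these are equivalent dichotomies.
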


\begin{proof}
Suppose for contradiction that $G$ has an IT $X \subseteq V(G)$. Since
$|X| = m(H) + m(J) - 1$, we know that $X$ contains either (at least) $m(H)$
vertices of $H$ or 
$m(J)$ vertices of $J$. If $X$ contains $m(H)$ vertices of $H$, then
since the vertex partition of $H$ is unchanged in $G$, it follows that
$X \cap V(H)$
is an IT of $H$. This contradiction shows that $X$ contains $m(J)$ vertices of
$J$. Then at least one of these vertices is contained in the class of $J$
that was distributed among the classes of $H$, because all other
partition classes of $J$ are unchanged and hence must each intersect
$X$ exactly once. But then $X\cap V(J)$ contains an IT of $J$, again a
contradiction. 
%
\end{proof}

To simplify the description of the upcoming construction, we introduce
some terminology. Let $k_A, k_B, D_A, D_B$ be positive
integers. A $(k_A,k_B,D_A,D_B)$-{\it graph} is a bipartite graph $H$ with
parts $A$ and $B$, together with a partition of $A$ and of $B$, such that
each vertex in $A$ (respectively $B$) has degree at most $D_A$
(respectively $D_B$), and each $A$-class (respectively $B$-class) has
size at most $k_A$ (respectively $k_B$). An $A$-class (respectively
$B$-class) is said to be {\it full} if it has size $k_A$ (respectively
$k_B$), and $H$ is {\it full} if all its classes are full. Thus our
aim is to construct a full $(k_A, k_B, D_A, 
D_B)$-graph $H$ with no IT, under the condition 
\begin{align} \label{ratio-condition}
	\frac{D_B}{k_A} + \frac{D_A}{k_B} > 1. \tag{$\ast$}
\end{align}
An example of a full $(5,6,3,3)$-graph with no IT is presented in Figure~\ref{fig:constr_(5,6,3,3}.

\begin{figure}[!h]
\centering
\begin{tikzpicture}[thick, scale=.7]

\foreach \x in {1,3}{
\begin{scope}[yscale=.25] 
\draw[fill=black!10!white] (3.75+2*\x,4) circle (1.48);
\end{scope}
}

\foreach \x in {5,7}{
\begin{scope}[yscale=.25] 
\draw[fill=black!10!white] (4.25+2*\x,4) circle (1.48);
\end{scope}
}

\begin{scope}[yscale=0.5] 
\draw[fill=orange] (3,2) circle (0.7);
\end{scope}

\draw[fill=orange] (7.5,1) circle (0.2);
\draw[fill=orange] (11.5,1) circle (0.2);
\draw[fill=blue] (16.5,1) circle (0.2);
\draw[fill=blue] (12.5,1) circle (0.2);

\begin{scope}[yscale=0.5] 
\draw[fill=blue] (21,2) circle (0.7);
\end{scope}

\foreach \x in {1,3,...,9}{
\begin{scope}[yscale=.25] 
\draw[fill=black!10!white] (2+2*\x,0) circle (1.75);
\end{scope}
}

\foreach \x in {1,2,...,10}{
\foreach \y in {1,2,3}{
\foreach \z in {1,2,3}{
\draw[thick] (2*\x+0.5*\z,0) -- (2*\x+0.5*\y,1);
}
\draw[fill] (2*\x+0.5*\y,1) circle (0.1);	
\draw[fill] (2*\x+0.5*\y,0) circle (0.1);	
}
}

\end{tikzpicture}
\caption{A full $(5,6,3,3)$-graph, where the partition classes are
  represented by ovals, except that the vertices contained in orange
  are all a single partition class, as are the vertices contained in
  blue. Note that the graph in Figure 1 is a subgraph of this example.}\label{fig:constr_(5,6,3,3}
\end{figure}
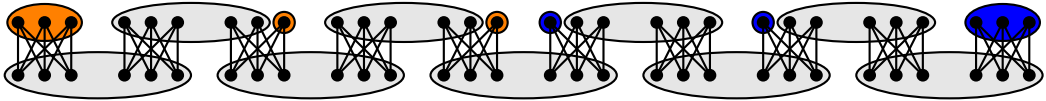

First we specify some simple assumptions that we may make in
order to construct the desired graph, given the parameters $k_A, k_B,
D_A, D_B$.  
\begin{obs}\label{easyob} 
  Let positive integers $k_A, k_B, D_A, D_B$ be given, such that
Condition~(\ref{ratio-condition}) holds. Then without loss of
generality we may assume each of the following.
\begin{enumerate}
\item There exists a positive integer $t$ such that $k_AD_A + k_BD_B = k_Ak_B+t$,
\item $D_A \leq k_B \leq2D_A-1$,
\item $k_A\geq 2D_B$.
\end{enumerate}
\end{obs}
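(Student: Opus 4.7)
The plan is to handle each item in turn. Item (1) is immediate: multiplying Condition~$(\ast)$ through by the positive integer $k_Ak_B$ yields $k_AD_A + k_BD_B > k_Ak_B$, so the integer $t := k_AD_A + k_BD_B - k_Ak_B$ is positive.

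For items (2) and (3), I would aim to apply a sequence of parameter modifications, each preserving Condition~$(\ast)$ together with the property that a counterexample for the modified parameters yields one for the original. First, I would replace $D_A$ with $\min(D_A, k_B)$ and $D_B$ with $\min(D_B, k_A)$: this preserves $(\ast)$ trivially and yields $D_A \le k_B$ and $D_B \le k_A$, since any counterexample with smaller maximum degrees remains valid for the original. Second, if the upper bound in item~(2) fails (i.e.\ $k_B \ge 2D_A$), then $\frac{D_A}{k_B} \le \frac{1}{2}$, and $(\ast)$ forces $\frac{D_B}{k_A} > \frac{1}{2}$, i.e.\ $k_A \le 2D_B - 1$; swapping the labels $A \leftrightarrow B$ (a symmetry of the problem) then makes $k_B \le 2D_A - 1$ hold. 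This establishes item~(2).

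For item~(3), if $k_A < 2D_B$ still holds after the previous steps, the plan is to \emph{increase} $k_A$ to $2D_B$. Condition~$(\ast)$ persists for the new parameters, since $\frac{D_B}{2D_B} + \frac{D_A}{k_B} = \frac{1}{2} + \frac{D_A}{k_B} > 1$, where we use $\frac{D_A}{k_B} > \frac{1}{2}$ (which is in turn implied by $k_B \le 2D_A - 1$ from item~(2)). Moreover, any counterexample $H$ for the new parameters yields one for the original by deleting $2D_B - k_A$ vertices from each $A$-class, one at a time: if at any stage the resulting graph had an IT, then the same set would be an IT of $H$ (it uses none of the deleted vertices and the remaining edges are unchanged), contradicting that $H$ has no IT.

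The hardest part will likely be justifying the last step, which is an \emph{expansion} of $k_A$ rather than a reduction; the key insight making it work is that vertex removal from a partition class can only preserve or create the no-IT property, never destroy it, so one may freely pass to a counterexample with larger uniform class sizes in $A$ and descend back at the end.
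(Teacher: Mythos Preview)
Your proposal is correct and follows essentially the same approach as the paper: item~(1) is identical, item~(2) combines the same degree-capping and symmetry arguments (you reduce both $D_A$ and $D_B$ first and then swap, whereas the paper swaps and then reduces only $D_A$, but both orderings work), and item~(3) uses the same increase-$k_A$-then-delete-vertices trick (you jump straight to $k_A=2D_B$ while the paper increments, but the justification via $k_B\le 2D_A-1\Rightarrow \tfrac{D_A}{k_B}>\tfrac12$ is the same).
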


\begin{proof}
  Part (1) is immediate from Condition~(\ref{ratio-condition}) and the
  fact that all parameters are integers. For the second inequality of 
  Part (2), if on the contrary both $k_A\geq 2D_B$ and $k_B\geq 2D_A$ 
  hold, then Condition~(\ref{ratio-condition}) is violated. Thus we may
  assume $k_B \leq2D_A-1$. For the first inequality of 
  Part (2), if $D_A > k_B$, then Condition~(\ref{ratio-condition}) still holds if we replace $D_A$ by $D_A' = k_B$. Thus if we could construct
  a full $(k_A, k_B, D_A', D_B)$-graph $H$ with no IT, then $H$ would
  also be a full $(k_A, k_B, D_A, D_B)$-graph with no IT. Thus we may
  assume $D_A \le k_B$. For Part (3),
  suppose on the contrary that $k_A \leq2D_B-1$. Then using (2),
\begin{align*}
\frac{D_B}{k_A+1} + \frac{D_A}{k_B} > \frac{1}{2} + \frac{1}{2} = 1.
\end{align*}
  Thus if we could construct a  full $(k_A+1, k_B, D_A, 
D_B)$-graph $H$ with no IT, by removing a vertex from each $A$-class
we would obtain a full $(k_A, k_B, D_A, 
D_B)$-graph with no IT, as required. Hence we may assume $k_A$ is as
large as possible such that Condition~(\ref{ratio-condition}) holds,
and therefore  $k_A\geq 2D_B$.
\end{proof}

For a $(k_A, k_B, D_A, D_B)$-graph $H$ with $a$ $A$-classes and $b$
$B$-classes, the $A$-{\it deficit} (respectively $B$-{\it deficit}) of
$H$ is $ak_A - |A|$ (respectively $bk_B-|B|$). Thus each deficit is
non-negative, and $H$ is full if and only if the $A$-deficit and
$B$-deficit of $H$ are both zero. We will achieve our aim using
Lemma~\ref{join-bipartite} over many steps. 

The three main phases toward constructing our $(k_A,k_B,D_A,D_B)$-graph
$H$ of Theorem~\ref{thm:sharp} are summarized as follows: 
\begin{enumerate}[I.]
	\item Construct a $(k_A, k_B, D_A, D_B)$-graph $H_0$ with no
          IT, $A$-deficit $0$ and $B$-deficit less than $k_B$, and a
          $B$-class of size $D_A$. 
	\item Using $H_0$, construct a $(k_A, k_B, D_A, D_B)$-graph
          $H_1$ with no IT, $A$-deficit $0$, and $B$-deficit at most
          $D_A$. 
	\item Using $H_1$, construct a full $(k_A, k_B, D_A, D_B)$-graph
          $H$ with no IT, as required.
\end{enumerate}

\paragraph{Phase~I.}
Let $K$ denote the complete bipartite graph $K_{D_B,
  D_A}$. We will construct the graph $H_0$ via the following procedure,
which iteratively adds copies of $K$ while applying
Lemma~\ref{join-bipartite}. In Steps~(2) and~(3.1), the current graph
$H_0$ 
will play the role of $H$ in Lemma~\ref{join-bipartite} while the
added graph $K$ will play the role of $J$. This applies in similar
fashion to Step~(3.2). The only restriction we implicitly impose
during these distribution steps is that we do not increase any
$A$-class size beyond $k_A$ or $B$-class size beyond $k_B$, so that
$H_0$ remains a $(k_A, k_B, D_A, D_B)$-graph at all times.
\begin{itemize}
	\item[(1)] Let $H_0 = K$ with one $A$-class (of size $D_B$)
          and one $B$-class (of size $D_A$). 
	\item[(2)] If the $A$-deficit of $H_0$ is at least $D_B$: Add
          to $H_0$ a new copy of $K$, with a new $B$-class of $H_0$ consisting of the
          $B$-side of $K$, and the $A$-side of $K$ distributed among
          the existing $A$-classes of $H_0$. Rename the new graph
          $H_0$ and go to (2). 
	\item[(3)] Else if the $A$-deficit of $H_0$ is positive and
          less than $D_B$: 
	\begin{itemize}
		\item[(3.1)] If the $B$-deficit is at least $D_A$: Add
                  to $H_0$ a new copy of $K$, with a new $A$-class of $H_0$
                  consisting of the $A$-side of $K$, and
                  the $B$-side of $K$ distributed among the existing
                  $B$-classes of $H_0$. Rename the new graph 
          $H_0$ and go to (2).
		\item[(3.2)] Else (the $B$-deficit is less than
                  $D_A$): Let $d_A$ be the $A$-deficit. Add to $H_0$ a
                  copy of $K' = K_{d_A,D_A}$, with a new $B$-class of $H_0$
                  consisting of the $B$-side of $K'$, and
                  the $A$-side of $K'$ distributed among the existing
                  $A$-classes of $H_0$. Rename the new graph
          $H_0$. Stop. Return $H_0$.
       	\end{itemize}
	\item[(4)] Else (the $A$-deficit is $0$): Stop. Return $H_0$.
\end{itemize}

\begin{lem}\label{H0props}
The above procedure terminates, and the output graph $H_0$ is a
$(k_A,k_B,D_A,D_B)$-graph with no IT, $A$-deficit $0$, and $B$-deficit
less than $k_B$. The last class added is a $B$-class, and its size is $D_A$. 
\end{lem}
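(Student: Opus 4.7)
The plan is to verify by induction on iterations that $H_0$ always remains a $(k_A,k_B,D_A,D_B)$-graph with no IT, to argue the procedure halts in finitely many steps, and then to read off the remaining properties from whichever of (3.2) or (4) fires. For the base case, $H_0 = K_{D_B,D_A}$ has class sizes $D_B \le k_A$ and $D_A \le k_B$ by parts~(3) and~(2) of Observation~\ref{easyob}, degrees exactly $D_A$ and $D_B$, and trivially no IT. At each later step we add a copy of $K$ or $K' = K_{d_A,D_A}$; the new vertices have the prescribed degrees, and the distribution of the new side into existing classes is feasible because in every case the relevant deficit is at least the number of vertices being distributed (for instance, step~(2) moves $D_B$ new $A$-vertices and is only invoked when the $A$-deficit is at least $D_B$, so greedy filling works without overflowing any class). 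Lemma~\ref{join-bipartite}, applied at each step with the current $H_0$ in the role of $H$ and the new copy in the role of $J$, preserves the no-IT property.

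For termination, I would track $d_A \bmod D_B$. Step~(2) leaves this residue unchanged, whereas step~(3.1) shifts $d_A$ by $k_A - D_B \equiv k_A \pmod{D_B}$. Hence at the end of the $n$-th phase of step~(2), $d_A \equiv nk_A \pmod{D_B}$; the smallest positive $n$ for which this vanishes is $D_B/\gcd(k_A,D_B)$, at which point that phase drives $d_A$ to $0$ and step~(4) fires. Step~(3.2) may fire earlier if $d_B$ drops below $D_A$ at some intermediate stage, but either way the procedure halts after boundedly many iterations.

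The crux, and the only place where hypothesis~(\ref{ratio-condition}) is really used, is to verify $d_B < k_B$ at termination. In the (3.2) case the precondition $d_B < D_A$, together with the increment of $k_B - D_A$ contributed by the new $B$-class of $K'$, gives final $d_B < k_B$ immediately. For the (4) case, let $a$ and $b$ denote the final numbers of $A$- and $B$-classes. A direct tally of the copies of $K$ used---one initial copy, plus one per step~(2) and one per step~(3.1)---yields $|A| = (a+b-1)D_B$ and $|B| = (a+b-1)D_A$. The exit condition $d_A = 0$ then gives $ak_A = (a+b-1)D_B$; substituting $a+b-1 = ak_A/D_B$ into $d_B = bk_B - (a+b-1)D_A$ and clearing denominators, the desired $d_B < k_B$ rearranges to $1 < D_A/k_B + D_B/k_A$, which is precisely~(\ref{ratio-condition}). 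Finally, the last class added is a $B$-class of size $D_A$ in both cases: in case~(4) it is the $B$-side of the final copy of $K$ appended by step~(2), and in case~(3.2) it is the $B$-side of $K'$.
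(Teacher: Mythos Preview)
Your proof is correct, and the scaffolding (inductive preservation of the $(k_A,k_B,D_A,D_B)$-graph property via Lemma~\ref{join-bipartite}, then termination, then reading off the deficits) matches the paper's. You diverge in two places. For termination, the paper proves an exact running formula $d_B = k_B - d_A(k_B-D_A)/D_B - at/D_B$ (with $t$ as in Observation~\ref{easyob}(1)) valid at every step before~(3.2), and uses it to show that once $a$ is large enough $d_B$ is forced below $D_A$, triggering~(3.2). You instead track $d_A \bmod D_B$ across phases of step~(2) to show that after at most $D_B/\gcd(k_A,D_B)$ phases $d_A$ hits zero, triggering~(4). Both work; yours is more elementary and avoids the formula entirely, while the paper's formula does double duty by also giving $d_B<k_B$ throughout the run rather than only at the end. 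For $d_B<k_B$ at termination in case~(4), your direct count $|A|=|B|\cdot D_B/D_A=(a+b-1)D_B$ and rearrangement to condition~(\ref{ratio-condition}) is exactly the paper's Claim specialised to $d_A=0$.

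One small point you leave implicit: in case~(4), asserting that the final copy was appended by step~(2) requires knowing that step~(4) cannot immediately follow step~(1) or step~(3.1). This holds because after either of those $d_A \ge k_A - D_B \ge D_B > 0$ by Observation~\ref{easyob}(3) (equivalently: only step~(2) ever decreases $d_A$). The paper makes this explicit; it would be worth one sentence in yours.
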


\begin{proof}
Clearly the starting graph $H_0 = K$ is a $(k_A,k_B,D_A,D_B)$-graph
with no IT. By Lemma~\ref{join-bipartite}, $H_0$ has no IT throughout
the procedure. At each execution of Step~(2) (whenever the $A$-deficit is at
least $D_B$), we add a copy of $K$ to
lower the $A$-deficit of $H_0$ by $D_B$. Thus in particular, Step~(2)
is executed at most a finite number of times consecutively (actually
this number is at most $\lceil k_A/D_B\rceil$ but we do not need
this fact). At each
execution of Step~(3.1), we similarly add a copy 
of $K$ to lower the $B$-deficit of $H_0$ by $D_A$ if the $B$-deficit
is at least $D_A$. Thus, Steps~(2) and~(3.1) are always executed
in such a way that $H_0$ remains a $(k_A,k_B,D_A,D_B)$-graph. The same
is easily seen to be true after the execution of Step~(3.2), if it ever
occurs.

Next, assuming the above procedure terminates, we see that
whether it terminates at Step~(3.2) or at Step~(4), the $A$-deficit of
$H_0$ will be $0$. To show that the algorithm terminates and that the
$B$-deficit of $H_0$ will be less than $k_B$, we need the following
claim. Let $t$ be as in Observation~\ref{easyob}(1).

\begin{claim}
Consider any step of the procedure before Step~(3.2) is executed. Let
$a$ and $b$ denote the current numbers of $A$-classes and $B$-classes,
respectively, and let $d_A$ denote the current $A$-deficit of
$H_0$. Then the current $B$-deficit $d_B$ of $H_0$ is given by 
\begin{align*}
	d_B = k_B - \frac{d_A(k_B - D_A)}{D_B} - \frac{at}{D_B}.
\end{align*}
\end{claim}
\begin{claimproof}
By the definition of $A$-deficit, we have $|A| = ak_A - d_A$. Hence the
number of copies of $K = K_{D_B, D_A}$ is $(ak_A - d_A)/D_B$. On the
other hand, the number of copies of $K$ is exactly $a + b - 1$, since we
start with one copy and $a=b=1$
and then add one new copy plus one new $A$-class or $B$-class at each execution of Step~(2) or
(3.1). Thus $a + b - 1 = (ak_A - d_A)/D_B$ and the $B$-deficit is 
\begin{align*}
	d_B &= bk_B - (a + b - 1)D_A = (a + b - 1)(k_B - D_A) - (a - 1)k_B \\
	&= \frac{ak_A - d_A}{D_B} (k_B - D_A) - (a - 1)k_B = a \left(
        \frac{k_A(k_B - D_A)}{D_B} - k_B \right) - \frac{d_A(k_B -
          D_A)}{D_B} + k_B \\ 
	&= -\frac{a(k_AD_A + k_BD_B - k_Ak_B)}{D_B} - \frac{d_A(k_B -
          D_A)}{D_B} + k_B = -\frac{at}{D_B} - \frac{d_A(k_B -
          D_A)}{D_B} + k_B. 
\end{align*}
This completes the proof of the claim.
\end{claimproof}
From the claim, we see that while Step~(3.2) has not been executed, the
$B$-deficit of $H_0$ is always less than $k_B$. Thus if the algorithm
terminates at Step~(4) then the $B$-deficit is less than $k_B$ as
needed. If Step~(3.2) is
executed, then the $B$-deficit will still be less than $D_A + (k_B -
D_A) = k_B$, so again we achieve the $B$-deficit requirement.

To show termination, observe that if $a > D_B(k_B - D_A)/t$, then for any
$d_A \ge 0$ we have  
\begin{align*}
	k_B - \frac{d_A(k_B - D_A)}{D_B} - \frac{at}{D_B} < k_B -
        \frac{d_A(k_B - D_A)}{D_B} - (k_B - D_A) \le D_A. 
\end{align*}
Therefore, the claim implies that unless Step~(4) or Step~(3.2) is executed
(which terminates the algorithm), after adding sufficiently many
$A$-classes, the $B$-deficit $d_B$ of $H_0$ will always be less than
$D_A$ in subsequent steps. Then after possibly executing Step~(2) some
more times, $H_0$ will then have $A$-deficit less than $D_B$ and
$B$-deficit less than $D_A$, the conditions under which Step~(3.2) is
executed. Thus the procedure terminates. 

Finally, consider the last class added in the procedure before
termination. If it is an $A$-class then by
Observation~\ref{easyob}(3), the new $A$-deficit is at least
$k_A-D_B\geq D_B$. Hence the conditions for termination (Steps~(3.2) or~(4)) do not hold. Thus the last class added is a $B$-class, and it is
added in an execution of Step~(2) or Step~(3.2). Hence it contains exactly $D_A$
vertices, as required.
\end{proof}

\paragraph{Phase~II.}
We obtain the graph $H_1$ by stringing together
multiple copies of the graph $H_0$, the output from Phase~I, and distributing vertices again via
Lemma~\ref{join-bipartite}. We use the following procedure: 


\begin{itemize}
	\item[(1)] Let $H_1 = H_0$ with the same partition classes.
	\item[(2)] If $H_1$ has $B$-deficit greater than $D_A$: Add to
          $H_1$ a copy of $H_0$ together with all its partition
          classes, except for one 
          $B$-class of size $D_A$, which is distributed among
          existing $B$-classes in $H_1$. Rename the new graph $H_1$
          and go to (2). 
	\item[(3)] Else (the $B$-deficit is at most $D_A$): Stop. Return $H_1$.
\end{itemize}

\begin{lem}
The above procedure terminates, and the output graph $H_1$ is a
$(k_A,k_B,D_A,D_B)$-graph with no IT, $A$-deficit $0$, and $B$-deficit
at most $D_A$. 
\end{lem}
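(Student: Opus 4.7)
The plan is to track the $B$-deficit of $H_1$ across iterations of step~(2) and show that it strictly decreases by a fixed positive amount, which gives both termination and the stated bound on the final $B$-deficit.

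First, I would confirm the invariants preserved by each execution of step~(2): $H_1$ remains a $(k_A,k_B,D_A,D_B)$-graph with no IT and $A$-deficit~$0$. The absence of an IT follows by induction via Lemma~\ref{join-bipartite}, applied with the current $H_1$ as $H$, the new copy of $H_0$ as $J$, and the distributed class being a $B$-class of $H_0$ of size $D_A$ (guaranteed to exist by Lemma~\ref{H0props}). The $A$-deficit stays at~$0$ because $H_0$ has $A$-deficit~$0$ and its $A$-classes are added to $H_1$ intact. Degree bounds are inherited from $H_0$ since merging adds no edges. Finally, distributing $D_A$ vertices into existing $B$-classes of $H_1$ without violating the size cap $k_B$ is feasible greedily because the loop condition forces the pre-step $B$-deficit to be at least $D_A+1$.

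The key computation: let $d_B^0$ denote the (fixed) $B$-deficit of $H_0$, so $0\le d_B^0<k_B$ by Lemma~\ref{H0props}, and let $b$ be its number of $B$-classes. In one iteration, the absorbed $B$-class of size $D_A$ lowers the deficit of existing $B$-classes of $H_1$ by $D_A$, while the remaining $b-1$ $B$-classes of $H_0$ are added new, with total size $bk_B-d_B^0-D_A$ against capacity $(b-1)k_B$, contributing deficit $d_B^0+D_A-k_B$. Hence
\begin{align*}
d_B^{\mathrm{new}} = (d_B^{\mathrm{old}}-D_A) + (d_B^0+D_A-k_B) = d_B^{\mathrm{old}} - (k_B - d_B^0),
\end{align*}
a strict decrease by $k_B-d_B^0\ge 1$ as long as $d_B^0<k_B$. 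Since the $B$-deficit is a non-negative integer, it must drop to the halting threshold (at most $D_A$) in finitely many steps. The edge case $d_B^0=0$ means $H_0$ is already full, so $H_1=H_0$ halts step~(3) immediately with $B$-deficit $0\le D_A$.

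The only subtlety — the main (but minor) obstacle — is verifying that every deficit encountered is non-negative: the new-class contribution $d_B^0+D_A-k_B\ge 0$ holds because the $b-1$ unabsorbed $B$-classes of $H_0$ each have size at most $k_B$, which forces $d_B^0\ge k_B-D_A$. Combined with the loop condition $d_B^{\mathrm{old}}>D_A\ge k_B-d_B^0$, this ensures $d_B^{\mathrm{new}}\ge 0$, consistent with the graph-theoretic interpretation, and the rest of the argument is routine bookkeeping.
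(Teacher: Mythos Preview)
Your proof is correct and essentially the same as the paper's: both track the $B$-deficit across executions of step~(2), compute that each iteration reduces it by the fixed positive amount $k_B-d_B^0$ (equivalently $|B_0|-(b_0-1)k_B$ in the paper's notation), and deduce termination and the required bound. Your treatment is slightly more explicit about feasibility of the distribution step and non-negativity, but the argument is otherwise identical.
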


\begin{proof}
  Again by Lemma~\ref{join-bipartite} the output graph has no IT.
  
Let $B_0$ denote the $B$-side of $H_0$ and $b_0$ its number of
$B$-classes. Then the $B$-deficit of $H_0$ is $b_0k_B - |B_0|$,
which is less than $k_B$ by Lemma~\ref{H0props}. Hence $|B_0| - (b_0 -
1)k_B > 0$. The $A$-deficit of $H_1$ is $0$ at 
every step of the procedure since the $A$-deficit of each added copy
of $H_0$ is zero. On the other hand, at every execution of Step~(2) of
the procedure, we add 
$b_0 - 1$ new $B$-classes, and an additional $|B_0|$ vertices. Moreover,
since the condition for Step~(2) is that the current graph has
$B$-deficit greater than $D_A$, all $D_A$ vertices of the chosen
$B$-class in the new copy of
$H_0$ can be distributed among the $B$-classes of $H_1$ such that the
result is still a $(k_A,k_B,D_A,D_B)$-graph. Thus
after the $j$th iteration of Step~(2), we inductively obtain a
$(k_A,k_B,D_A,D_B)$-graph with
$B$-deficit
$$d_B^j=(j(b_0-1)+b_0)k_B - (j + 1)|B_0|=b_0k_B-|B_0|-j(|B_0| - (b_0 -
1)k_B).$$ 
Since  $|B_0| - (b_0 - 1)k_B > 0$, the $B$-deficit decreases at every
execution of Step~(2). Thus Step~(2) cannot be executed more than
$j=\ceilfrac{b_0k_B-|B_0|}{|B_0| - (b_0 - 1)k_B}$ times, and at
some iteration the $B$-deficit becomes at most $D_A$. Thus the
procedure terminates and outputs a graph with the required properties.
\end{proof}

\paragraph{Phase~III.}
Let $d_B$ be the $B$-deficit of $H_1$, the output of Phase~II, and so
$d_B \le D_A$. If $d_B = 0$, then we set $H = H_1$ and we are
done. Otherwise $d_B > 0$, and we construct a graph $H_1'$ by adding to
$H_1$ a copy of $K' = K_{D_B, d_B}$, with a new $A$-class $A_0$ consisting of the
$A$-side of $K'$, and the $B$-side of $K'$ distributed among the existing
$B$-classes in $H_1$, as per Lemma~\ref{join-bipartite}. Then $H_1'$ is a $(k_A,k_B,D_A,D_B)$-graph with no IT,
$B$-deficit zero and $A$-deficit $k_A-D_B$, and only one $A$-class
$A_0$ that 
is not full. Now we construct the graph $H$ by
first setting $H = 
H_1'$ and iteratively adding to it a copy of $H_1'$ along with all its vertex
classes except for the corresponding copy of $A_0$, whose vertices are
all added into the existing class $A_0$ in $H$ (as per Lemma~\ref{join-bipartite}), until this class has size at
least $k_A$. This iteratively increases the only non-full class
$A_0$ until its size reaches or exceeds $k_A$. We finish by
deleting $|A_0| - k_A$ vertices from $A_0$ to ensure $|A_0|=k_A$. This
gives us a $(k_A, k_B, D_A, 
D_B)$-graph $H$ with no IT, $A$-deficit $0$, and $B$-deficit $0$, thus
proving Theorem~\ref{thm:sharp}. \hfill$\blacksquare$

%
%
%
%

%
%
%
%
%
%

\section{Sharpness in a very asymmetric setting}\label{df}

In this section, we make use of the construction of Szab\'o and
Tardos~\cite{SzTa06} to prove sharpness of Proposition~\ref{prop:CaKa22}. 

\begin{prop}\label{prop:CaKa22sharp}
For any positive integer $D$,
there exists a bipartite-cover $H=H(A,B,G,L)$ of maximum degree $D$
with no IT, such that $|L(v)|=2D^2-1$ for all $v\in A_G$, $|L(w)|=2$ for all $w\in B_G$, and  no vertex of $A_H$ is adjacent to both vertices of $L(w)$ for some $w\in B_G$.
\end{prop}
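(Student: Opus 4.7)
The plan is to start from the Szab\'o--Tardos construction~\cite{SzTa06} and convert it into the required asymmetric bipartite cover by a blowup, then scale up via Lemma~\ref{join-bipartite}. Szab\'o--Tardos supply, for each $D\ge 2$, a graph $G_{ST}$ of maximum degree $D$ with a partition into $m$ classes of size $2D-1$ that admits no independent transversal; this $G_{ST}$ serves as the backbone.

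From $G_{ST}$, I construct an auxiliary bipartite cover $H_0$ by a ``shared blowup'' of factor $D$: on the $A$-side, each $v\in V(G_{ST})$ is replaced by $D$ copies $(v,1),\dots,(v,D)$; on the $B$-side, each edge $e=uv$ of $G_{ST}$ contributes a single $B$-class $L(e)=\{w_e^u,w_e^v\}$ of size $2$, where $w_e^u$ is adjacent to every copy $(u,j)$ and $w_e^v$ to every copy $(v,j)$. Each $A$-class $C\times[D]$ then has size $D(2D-1)=2D^2-D$; each $B$-class has size $2$; the maximum degree is $D$ on both sides (saturated on the $B$-side); and the two vertices of every $L(e)$ have disjoint $A$-neighborhoods, so no $A$-vertex is adjacent to both. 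A direct argument shows $H_0$ has no IT: an IT of $H_0$ would select a vertex $(v_C,j_C)$ per $A$-class such that for every edge $e=v_Cv_{C'}$ of $G_{ST}$ between two chosen vertices the corresponding $x_e\in L(e)$ must avoid both $w_e^{v_C}$ and $w_e^{v_{C'}}$, which is impossible; hence $\{v_C\}$ would be an IT of $G_{ST}$, contradicting Szab\'o--Tardos.

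Since the target $A$-class size $2D^2-1 = D(2D-1)+(D-1)$ exceeds the base class size by $D-1$, the next step is to grow each $A$-class. I take $T$ disjoint copies of $H_0$ for a sufficiently large $T$ and iteratively apply Lemma~\ref{join-bipartite}: at each step one $A$-class of the next copy is distributed across the $A$-classes of the accumulated graph. Because Lemma~\ref{join-bipartite} adds no new edges, the degree bound $D$ on both sides, the $B$-class size $2$, and the no-adjacent-to-both condition all survive, and no IT is introduced. By choosing the distributions carefully and taking $T$ large enough, we can arrange that every $A$-class in the final graph has size at least $2D^2-1$; a final trim of any oversized classes then yields a bipartite cover with all $A$-classes of size exactly $2D^2-1$.

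The main obstacle I foresee is ensuring that \emph{every} $A$-class in the final graph reaches size at least $2D^2-1$: in particular, the $m-1$ ``leftover'' $A$-classes of the last-absorbed copy $H_0^{(T)}$ enter the partition only at the final merge and thus have no subsequent opportunity to be grown. I plan to address this by appending one or more further absorption steps, each with a fresh copy of $H_0$ whose absorbed $A$-class is distributed exclusively into the current short classes; the relevant feasibility condition $(m-1)(D-1)\le 2D^2-D$ holds automatically for the Szab\'o--Tardos parameters, so each short class can be brought up to size at least $2D^2-1$ by such a step. Iterating and then trimming produces the desired $H$, and the remaining checks---degree $\le D$ on both sides, $B$-class size $2$, and no $A$-vertex adjacent to both vertices of any $B$-class---are all direct.
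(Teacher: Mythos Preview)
Your base graph $H_0$ is correct and is a genuinely different starting point from the paper's: the paper instead builds $H$ so that the ``derived'' graph on $A$ (joining $N(x)$ to $N(y)$ for every $B$-pair $(x,y)$) is exactly the Szab\'o--Tardos construction for parameter $D^2$, which gives $A$-classes of size $2D^2-1$ immediately and needs no growing phase at all.

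The gap is in your growing phase. With the direction you specify---at each step one $A$-class of the \emph{new} copy is distributed into the accumulated graph---the number of short $A$-classes never drops below $m-1$: each such absorption can indeed fill the previous $m-1$ short classes (your feasibility check $(m-1)(D-1)\le 2D^2-D$ confirms this), but it simultaneously introduces $m-1$ fresh short classes from the newly added copy. Your proposed patch suffers exactly the same defect, so ``iterating'' never reaches a state with all $A$-classes of size at least $2D^2-1$. The repair is to reverse the roles in Lemma~\ref{join-bipartite}: at each step absorb a short $A$-class of the \emph{accumulated} graph into the $m$ classes of the fresh copy. Since $2D^2-D \ge m(D-1)$ (here $m=2D$), each of the $m$ new classes can receive at least $D-1$ vertices and hence reaches size at least $2D^2-1$, while the accumulated graph loses one short class and gains none. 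After $m$ such merges every $A$-class has size at least $2D^2-1$, and trimming finishes the construction.
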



\begin{proof}
Let $J$ be a bipartite graph with vertex classes $A$ and $B$, where
$B$ is partitioned into designated pairs $(x,y)$, and no vertex of $A$ is
adjacent to both $x$ and $y$ in any pair of $B$. Define the graph
$H'(J)$ from $J$ as follows. Set $V(H'(J))=A$. For each pair $(x,y)$ in
$B$, put an edge of $H'(J)$ joining each vertex of $N_J(x)$ to each
vertex of $N_J(y)$. We say each of these edges is {\it created} by the
pair $(x,y)$, so that $(x,y)$ creates a complete bipartite
subgraph of $H'(J)$ with vertex classes $N_J(x)$ and $N_J(y)$ (noting
that the condition on $J$ implies $N_J(x)\cap N_J(y)=\emptyset$). Note
that if $d_J(x)=d_J(y)=D$ then this complete bipartite subgraph of
$H'(J)$ is a $K_{D,D}$.
Given $D$, set $m=D^2$. The following claim was essentially proven in~\cite{CaKa22}.

\begin{claim}
There exists $J$ with $\Delta(J)=D$ such that $H'(J)=K_{m,m}$. 
\end{claim}

\begin{claimproof}
Choose the $A$-side of $J$ to be a set of size $2D^2$, partitioned into
$A_1\cup\ldots\cup A_D\cup C_1\cup\ldots\cup C_D$ where
$|A_i|=|C_j|=D$ for all $1\leq i,j\leq D$. Choose the $B$-side of $J$ to consist
of $D^2$ pairs $B_{ij}=(x_i,y_j)$. For each $(i,j)$ join $x_i\in B_{ij}$ to all of
$A_i$ and $y_j\in B_{ij}$ to all of $C_j$. Then $d_J(x_i)=d_J(y_j)=D$,
and each edge $e=zw$ of
$K_{m,m}$ with vertex classes $A'=A_1\cup\ldots\cup A_D$ and
$C'=C_1\cup\ldots\cup C_D$ is created by exactly one $B_{ij}$, namely
that for which $z\in A_i$ and $w\in C_j$.

The neighbourhood in $J$ of each $a\in A_i$ is the set of all $x_i\in
B_{ij}$ for $1\leq j\leq D$, so $d_J(a)=D$. Similarly for each $c\in
C_j$. Thus $\Delta(J)=D$.
\end{claimproof}

By taking $2m-1$ disjoint copies of the previous construction we get:

\begin{claim}
There exists $H$ with $\Delta(H)=D$ such that $H'(H)$ is the disjoint
union of $2m-1$ copies of $K_{m,m}$.
\end{claim}

Let $A$ and $B$ denote the vertex classes of $H$ from the last claim,
where $B$ is partitioned
into pairs as given. Now give a new partition of $A=V(H'(H))$ into
sets of size $2m-1$ ---let us call them {\it blocks}--- such that the graph
$H'(H)$ has no IT with respect to the blocks. This is possible since
the construction of Szab\'o and Tardos~\cite{SzTa06} (see Construction 3.3 therein) is itself a
disjoint union of $2m-1$ copies of $K_{m,m}$. We claim that $H$ has no
IT with respect to this partition.

Suppose on the contrary that $T$ is an IT of $H$. Let
  $T|_A$ denote the subset of $T$ that lies in blocks of $A$. Since
  $H'(H)$ has no IT, some edge $zw$ of $H'(H)$ joins two vertices in
  $T|_A$. By definition of $H'(H)$, this means that $\{z,w\}$ is
  matched to $\{x,y\}$ for some designated pair $(x,y)$ in $B$. But
  then $T$ cannot contain a vertex of $\{x,y\}$, contradicting that $T$ is an IT of $H$.
\end{proof}

\subsection*{Acknowledgements}

This work started in April of 2022 during a visit by PH to Nijmegen, where RJK was then affiliated.
We are grateful to the anonymous referees for their careful reading of the manuscript.

\paragraph{Open access statement.} For the purpose of open access,
a CC BY public copyright license is applied
to any Author Accepted Manuscript (AAM)
arising from this submission.

\bibliographystyle{abbrv}
\bibliography{bitrans}

\end{document}